\begin{document}
\title{An index theorem of Callias type for pseudodifferential operators}
\author{Chris Kottke}
\address{Massachusetts Institute of Technology\\ Department of Mathematics\\ Cambridge, MA 02139}
\curraddr{Brown University\\ Department of Mathematics\\ Providence, RI 02912}
\email{ckottke@math.brown.edu}
\subjclass[2010]{Primary 58J20; Secondary 19K56, 58J40}
\keywords{index theorem, scattering pseudodifferential operator, Dirac operator, scattering manifold, asymptotically conic manifold, 
asymptotically locally Euclidean manifold}
%\date{\today}
\newtheorem{thm}{Theorem}
\newtheorem{prop}[thm]{Proposition}
\newtheorem{cor}[thm]{Corollary}
\newtheorem{lem}[thm]{Lemma}
\newtheorem*{thm-}{Theorem}
\theoremstyle{remark}
\newtheorem*{rmk}{Remark}
\newtheorem*{rem}{Remark}

%% Macros from mathmacros
\newcommand\e{\epsilon}
\newcommand\pa{\partial}
\newcommand\id{\mathrm{Id}}
\newcommand\pt{\mathrm{pt}}
\newcommand\N{\mathbb{N}}
\newcommand\C{\mathbb{C}}
\newcommand\R{\mathbb{R}}
\newcommand\Z{\mathbb{Z}}

\newcommand{\pair}[1]{\left\langle #1\right\rangle}
\newcommand\set[1]{\left\{ #1 \right\}}
\newcommand\abs[1]{\left|#1\right|}
\newcommand\norm[1]{\left\|#1\right\|}
\newcommand\parens[1]{\left(#1\right)}

\newcommand\wt[1]{\widetilde{{#1}}}

\newcommand\smallto{\rightarrow}
\renewcommand\to{\longrightarrow}
\renewcommand\mapsto{\longmapsto}

\newcommand\Diffeo{\mathrm{Diffeo}}
\newcommand\Diff{\mathrm{Diff}}
\newcommand\scV{\mathcal{V}_{\mathrm{sc}}}
\newcommand\scH{H_{\mathrm{sc}}}
\newcommand\bscH{H_{\mathrm{b,sc}}}
\newcommand\scP{\Psi_{\mathrm{sc}}}
\newcommand\scDiff{\mathrm{Diff}_{\mathrm{sc}}}
\newcommand\bV{\mathcal{V}_{\mathrm{b}}}
\newcommand\Ker{\mathrm{ker}}
\newcommand\Coker{\mathrm{coker}}
\newcommand\GL{\mathrm{GL}}
\newcommand\supp{\mathrm{supp}}
\newcommand\Sym{\mathrm{Sym}}
\newcommand\End{\mathrm{End}}
\newcommand\Aut{\mathrm{Aut}}
\newcommand\Hom{\mathrm{Hom}}
%% Differs from index theory macros:
\newcommand\cV{\mathcal{V}}
\newcommand\cI{\mathcal{I}}
\newcommand\dirac[1]{{/}\!\!\!\!#1}
%% --
\newcommand\scT{{}^{\mathrm{sc}}T}
\newcommand\bT{{}^{\mathrm{b}}T}
\newcommand\scS{{}^{\mathrm{sc}}S}
\newcommand\scN{{}^{\mathrm{sc}}N}
\newcommand\Id{\mathrm{Id}}
\newcommand\ind{\mathrm{ind}}
\newcommand\topind{\mbox{\rm{top-ind}}}
\newcommand\dom{\mathrm{Dom}}
\newcommand\ran{\mathrm{Ran}}
\newcommand\Iso{\mathrm{Iso}}
\newcommand\ch{\mathrm{ch}}
\newcommand\Td{\mathrm{Td}}
\newcommand\Thom{\mathfrak{T}}
\newcommand\isym{\sigma_{\mathrm{int}}}
\newcommand\ssym{\sigma_{\mathrm{sc}}}
\newcommand\tsym{\sigma_{\mathrm{tot}}}
\newcommand\nisym{{}_m\sigma_{\mathrm{int}}}
\newcommand\nssym{{}_m\sigma_{\mathrm{sc}}}
\newcommand\ntsym{{}_m\sigma_{\mathrm{tot}}}
\newcommand\wn{\mathrm{wn}}
\newcommand\cl{\mathrm{c}\ell}
\newcommand\Cl{\mathbb{C}\ell}
\newcommand\tM{\widetilde M}
\newcommand\tP{\widetilde P}
\newcommand\tX{\widetilde X}
\newcommand\tY{\widetilde Y}
\newcommand\tV{\widetilde V}
\newcommand\talpha{\widetilde \alpha}
\newcommand\sspan{\mathrm{span}}

\begin{abstract}
We prove an index theorem for families of pseudodifferential operators generalizing those studied by C.\ Callias, N.\ Anghel and others.  Specifically, we consider
operators on a manifold with boundary equipped with an asymptotically conic (scattering) metric, which have the form $D + i\Phi$, where $D$ is elliptic 
pseudodifferential with Hermitian symbols, and $\Phi$ is a Hermitian bundle endomorphism which is invertible at the boundary and commutes with the 
symbol of $D$ there.  The index of such operators is completely determined by the symbolic data over the boundary.  We use the scattering calculus of R.\ Melrose 
in order to prove our results using methods of topological K-theory, and we devote special attention to the case in which $D$ is a family of Dirac operators, in 
which case our theorem specializes to give families versions of the previously known index formulas.
\end{abstract}

\maketitle

\section{Introduction}
In \cite{callias1978axial}, C.\ Callias obtained a formula for the Fredholm index of operators on odd-dimensional Euclidean space $\R^n$ having the form
\[
	P = \dirac{D}\otimes 1 + i \otimes \Phi(x) : C^\infty_c(\R^n; V'\otimes V'') \to C^\infty_c(\R^n; V'\otimes V''),
\]
where $\dirac{D}$ is a self-adjoint spin Dirac operator (associated to any connection with appropriate flatness at infinity), $\Phi$ is a Hermitian matrix-valued
function which is uniformly invertible off a compact set (representing a Higgs potential in physics), and $V'$ and $V''$ are trivial vector bundles.
According to his formula, the index depends only on a topological invariant of $\Phi$ restricted to $S^{n-1}$, the sphere at infinity.
In a following paper \cite{bott1978some}, R.\ Bott and 
R.\ Seeley interpreted Callias' result in terms of a symbol map (equivalent to the total symbol defined below)
$\sigma(P) : S^{2n-1} = \pa\parens{\overline{T^\ast \R^n}} \to \End(V'\otimes V'')$ and point out that the resulting index formula has the form of the product of the
Chern characters of $\sigma(\dirac{D})$ and $\Phi$, each integrated over a copy of $S^{n-1}$.

The Fredholm index of Dirac operators coupled to skew-Hermitian nonscalar potentials on various odd dimensional manifolds was subsequently studied by 
several authors, culminating in a result in \cite{anghel1993index} by N.\ Anghel (also obtained independently by J.\ R{\aa}de in \cite{rade1994callias},
and U.\ Bunke in \cite{bunke1995k} who proved a $C^\ast$ equivariant version of the theorem applicable in particular to families of Dirac operators)
for operators of the above type on arbitrary odd-dimensional, complete Riemannian manifolds .  Under suitable conditions 
on $\dirac{D}$, a Dirac operator associated to a vector bundle $V \to X$, and on the potential 
$\Phi \in C^\infty(X; \End(V))$, Anghel proved that 
\[
	\ind(\dirac{D} + i\Phi) = \ind(\dirac{\pa}^+_+)
\]
where $\dirac{\pa}^+_+$ is a related Dirac operator on a hypersurface $Y \subset X$, representing a suitable ``infinity'' in $X$.
The proofs of these results depend on the fact that $\dirac{D}$ is a Dirac operator -- Callias' original proof uses local trace formulas
of the integral kernels, Anghel and Bunke use the relative index theorem of Gromov and Lawson \cite{gromov1983positive}, and R{\aa}de uses elliptic boundary conditions
analogous to the Atiyah-Patodi-Singer conditions to preserve the index under various cutting and gluing procedures.

In this paper we shall determine the index of Callias-type operators via methods in topological K-theory, in the spirit of \cite{atiyah1968index_I_III}
and \cite{atiyah1971index_IV}, using R.\ Melrose's calculus of scattering pseudodifferential operators \cite{melrose1994spectral}.  In particular, this allows 
us to consider a class of {\em pseudodifferential} Callias-type operators, which we dub Callias-Anghel operators, and to obtain a families version 
of the index theorem with little additional effort.  Our result applies also to even-dimensional manifolds, where Callias-Anghel operators which are not of Dirac 
type may indeed have nontrivial index.

A Callias-Anghel operator $P$ on a manifold $X$ with boundary $\pa X$ (thought of as ``infinity'') has the form $P = D + i\Phi$, where $D \in \scP^m(X;V)$ is an 
elliptic scattering pseudodifferential operator with Hermitian symbols, and $\Phi \in C^\infty(X; \End(V))$ is a compatible potential, meaning that 
$\Phi_{|\pa X}$ is Hermitian, invertible and commutes with the symbol of $D$.  Our main result, proved in section \ref{S:results}, is the following.
\begin{thm-}
A Callias-Anghel operator $P = D + i\Phi$ extends to a Fredholm operator on naturally defined Sobolev spaces, with index
\[
	\ind(P) = \int_{S^\ast_{\pa X} X} \ch (V^+_+)\cdot\pi^\ast\Td(\pa X).
\]
where $V^+_+ \subset \pi^\ast V \to S^\ast_{\pa X} X$ is the bundle of jointly positive eigenvectors of $\sigma(D)$ and $\pi^\ast \Phi$ on the cosphere bundle of $X$
over $\pa X$.
\end{thm-}
Note that $V^+_+$ is well-defined since $\sigma(D)$ and $\pi^\ast \Phi$ are Hermitian and commute, hence are jointly diagonalizable.

The essence of our proof is to note that the index of $D + i\Phi$ is determined topologically by the symbolic data, which is shown to be trivial over the 
interior of $X$ and completely determined by the coupling between Hermitian (from $\sigma(D)$) and 
skew-Hermitian (from $i\Phi$) terms on the cosphere bundle over infinity ($S^\ast_{\pa X} X$).  Indeed, once the problem is properly formulated, the proof is 
a straightforward computation in K-theory.

In using the scattering calculus of pseudodifferential operators, we restrict ourselves to 
asymptotically conic, or ``scattering'' manifolds which are compact manifolds with boundary, equipped with metrics of the form
\[
	\frac{dx^2}{x^4} + \frac{h}{x^2}, \quad\text{$h_{|\pa X}$ a metric on $\pa X$},
\]
where $x$ is a boundary defining function.  Indeed, in order to have some reasonable class of pseudodifferential operators at our disposal, it is necessary
to restrict the asymptotic geometry in some way, and our choice is motivated by the following considerations.
\begin{itemize}
\item While this class of manifolds is geometrically more restricted than the general complete Riemannian manifolds considered by Anghel and others, the conditions 
for operators to be Fredholm on these spaces are much {\em less} restrictive and more easily verified in practice.  Correspondingly, we need to assume less 
about $D$ and $\Phi$ to obtain our result.  We discuss a connection between our setup and the one considered by Anghel in section \ref{S:connection}, and 
expect that using scattering models in the context of certain other noncompact index problems (essentially situations in which the Fredholm data is sufficiently 
local near infinity) may be possible.
\item The symbolic structure of the scattering calculus has a very simple interpretation in terms of topological K-theory, permitting us to utilize a powerful families
index theorem (Theorem \ref{T:AS_prelim}) derived from \cite{atiyah1971index_IV}.
\item The author's work on this subject was motivated by his thesis work on $SU(2)$ monopole moduli spaces over asymptotically conic manifolds, where the 
dimension of the moduli space is given by the index of a Callias-Anghel type operator.  In that case the potential $\Phi$
may have (constant rank) null space over $\pa X$; however, the index problem for such an operator, which will be the subject of a subsequent paper, may nonetheless 
be reduced to one of the type considered here.
\end{itemize}

We begin with a brief introduction to the scattering calculus in section \ref{S:scattering}, culminating with the proof of the index theorem for 
families of scattering operators.  We introduce Callias-Anghel type operators in section \ref{S:callias} and prove that they extend to Fredholm operators.  Section
\ref{S:reduction} is the heart of our result, and consists of the reduction of the symbol by homotopy to the corner $S^\ast_{\pa X}X$ of the total space 
$\pa(\overline{T^\ast X})$ in K-theory; it is entirely topological in nature.  We present our results in section \ref{S:results}, with a particular 
analysis of the important case of Dirac operators.  Finally, we discuss the relation to previous results in section \ref{S:connection}.

The author would like to thank his thesis advisor Richard Melrose for his support and guidance, and also Pierre Albin for many helpful conversations.

\section{The Scattering Calculus}\label{S:scattering}
We briefly recall the important elements of the scattering calculus of pseudodifferential operators.  A basic reference for the material in this section is 
\cite{melrose1994spectral}, and more generally \cite{melrose-differential}.  By a {\em scattering manifold} we shall mean 
a compact manifold $X$ with boundary, typically equipped with an exact scattering metric as defined below. We refer to 
$\pa X$ as ``infinity.''  

\subsection{Structure algebra and bundles}\label{S:scattering_structure}
Given a compact manifold $X$ with boundary, the algebra of {\em scattering vector fields} $\scV(X)$ is a Lie subalgebra of the algebra $\cV(X)$ of vector fields
defined by  
\[
	\scV(X) = x\bV(X).
\]
where $x$ is a boundary defining function ($x \geq 0,\, x^{-1}(0) = \pa X,\, dx_{|\pa X} \neq 0$), and $\bV(X)$ is the subalgebra of  vector fields tangent to 
the boundary:
\[
	\bV(X) = \set{V \in \cV(X)\;;\; Vx \in xC^\infty(X)}.
\]
Both $\bV(X)$ and $\scV(X)$ are independent of the choice of $x$.
In a coordinate neighborhood near the boundary, $\scV(X)$ is spanned by $x^2\pa_x, x\pa_{y_1},\ldots, x\pa_{y_n}$ where $y_1,\ldots,y_n$ are local coordinates on 
$\pa X$.  

Just as $\cV(X) = C^\infty(X,TX)$ is the space of sections of the vector bundle $TX$, $\scV(X)$ and $\bV(X)$ are the spaces of sections of the
{\em scattering and b tangent bundles} $\scT X$ and $\bT X$, respectively.  In local coordinates $x,y_1,\ldots,y_n$ near $\pa X$, bases 
for $\scT_p X$ and $\bT_p X$ at a point $p$ are given by
\[
	\scT_p X = \sspan_\R \set{x^2 \pa_x, x\pa_{y_1},\ldots, x\pa_{y_n}}, \quad \bT_p X = \sspan_\R \set{x\pa_x,\pa_{y_1},\ldots,\pa_{y_n}}.
\]
The {\em scattering and b cotangent bundles} $\scT^\ast X$ and $\bT^\ast X$ are the dual bundles to $\scT X$ and $\bT X$, with bases in local coordinates
given by
\[
	\scT_p^\ast X = \sspan_\R \set{\frac{dx}{x^2}, \frac{dy_1}{x},\ldots, \frac{dy_n}{x}}, \quad \bT_p^\ast X = \sspan_\R \set{\frac{dx}{x}, dy_1,\ldots,dy_n}.
\]
While $T^\ast X$, $\bT^\ast X$ and $\scT^\ast X$ (or $T X$, $\bT X$ and $\scT X$) are isomorphic in the interior of $X$, they are not canonically so at the boundary 
(any identification over $\pa X$ depends on a choice of boundary defining function $x$).  However they are homotopy equivalent as vector bundles, so for topological 
purposes such as index computations, the difference is often unimportant.  The reader unfamiliar with the scattering calculus may mentally replace $\scT$ by $T$ 
without much harm.

The natural metrics to consider in the context of scattering operators are those which are smooth sections not of $\Sym^2(T^\ast X)$ but of $\Sym^2(\scT^\ast  X)$.
We will further restrict consideration to the case of so-called {\em exact} scattering metrics, which have the form
\[
	g = \frac{dx^2}{x^4} + \frac{h}{x^2},
\]
for some choice of a boundary defining function $x$, and where $h$ restricts to a metric on the compact manifold $\pa X$.  Scattering operators (defined below) naturally
extend to operators on Sobolev spaces associated to such metrics.

In preparation for section \ref{S:dirac}, we define {\em scattering (respectively b) connections} on a vector bundle $V \to X$ as covariant derivatives
$$
	\nabla : C^\infty(X; V) \to C^\infty(X; \scT^\ast X\otimes V)\quad \parens{\text{resp.\ } \nabla : C^\infty(X; V)\to C^\infty(X; \bT^\ast X\otimes V)},
$$
which satisfy the Leibniz condition $\nabla(f \cdot s) = df \otimes s + f \cdot \nabla s$, where $f \in C^\infty(X)$ and $s \in C^\infty(X; V)$.
Here we are taking the image of the one-form $df$ in either $C^\infty(X; \scT^\ast X)$ or $C^\infty(X; \bT^\ast X)$, which is possible since
there are natural bundle maps $T^\ast X \to \bT^\ast X \to \scT^\ast X$ induced by the inclusions 
$\scV(X) \subset \bV(X) \subset \cV(X)$.

From the maps $T^\ast X \to \bT^\ast X \to \scT^\ast X$, we also see that true connections (which is what we shall call connections in the ordinary sense) extend
naturally to b and scattering connections, and that b connections similarly extend to scattering connections.  If a scattering connection $\nabla$ is obtained from 
such a b connection, we will say that it is the {\em lift of a b connection}.

A choice of boundary defining function gives a natural product structure $\pa X \times [0,1)_x$ on a neighborhood of the boundary and hence a way to extend vector
fields from $\pa X$ into the interior.  In this way, true and b connections restrict to connections $\nabla_{|\pa X}$ on the boundary.  Note 
however that a scattering connection does not naturally restrict to a connection on $\pa X$ unless it is the lift of a true or b connection.  For instance, 
given an exact scattering metric $g$, the Levi-Civita connection $\nabla^{\mathrm{LC}(g)}$ is the lift of a b connection\footnote{Note that $\nabla^{\mathrm{LC}(g)}$
is {\em not}, however, the lift of the Levi-Civita connection associated to $x^2\,g$.} which restricts to $\nabla^{\mathrm{LC}(h)}$ at $\pa X$.

\subsection{Operators and symbols}\label{S:scattering_operators}
The algebra of scattering differential operators acting on sections of a vector bundle $V$ is just the universal enveloping algebra of 
$\scV(X)\otimes C^\infty(X;\End(V))$ over $C^\infty(X)$; for a given $k \in \N_0$,
\[
	\scDiff^k(X;V) = \set{\sum_{0 \leq l \leq k} c_l\, \nabla_{V_1} \cdots \nabla_{V_l}\;;\; V_i \in \scV(X), c_l \in C^\infty(X; \End(V))}.
\]
and $\scDiff^\ast (X;V)$ forms a filtered algebra of operators on $C^\infty(X;V)$.

This algebra can be ``microlocalized'' to produce an algebra of {\em scattering pseudodifferential operators} acting
on sections of $V$, denoted
$\scP^m (X;V), m \in \R$, by constructing their Schwartz kernels on an appropriately blown up version of the space $X^2$ (see \cite{melrose1994spectral} for details).

Given $D \in \scP^m(X;V)$, we have an {\em interior symbol} map analogous to the usual principal symbol,
\[
	\isym(D) : \scT^\ast X \to \End(\pi^\ast V),
\]
where\footnote{We will use $\pi$ to denote the projection for various bundles related to $\scT^\ast X$, such as the scattering cosphere bundle $\scS^\ast X$ and the 
radially compactified scattering cotangent bundle $\overline{\scT^\ast X}$.  The appropriate domain will be clear from context, and no confusion should arise.} 
$\pi : \scT^\ast X \to X$.
In addition to this symbol, we have a {\em boundary} or {\em scattering symbol}
\[
	\ssym(D) : \scT^\ast_{\pa X} X \to \End(\pi^\ast V)
\]
which, for a differential operator in local coordinates, takes the form
\[
	D = \sum_{\abs{\alpha} + j \leq m} a_\alpha(x,y) (x^2\pa_x)^j (x\pa_y)^\alpha \; \implies\; 
	\ssym(D)(0,y,\zeta,\eta) = \sum_{\abs{\alpha} + j \leq m} a_\alpha(0,y) \zeta^j \eta^\alpha.
\]
(Note that the boundary symbol involves the sum over all orders in the operator, whereas the interior symbol only involves the top order $\abs{\alpha} + j = m$.)

Both symbols have asymptotic growth/decay of order $ \leq m$ along the fibers, and they satisfy the compatibility condition that, asymptotically, 
\[
	\isym (D) (p,\xi) \sim \ssym (D) (q,\xi)\quad \text{ as $p \smallto q$, $\abs{\xi} \smallto \infty$},
\]
where $p \in X$, $q \in \pa X$, $\xi \in \scT^\ast_p X$.  

We will restrict ourselves to so-called ``classical'' operators whose symbols have asymptotic expansions in terms of $\abs{\xi}^{m-k}, k \in \N$, as 
$\abs{\xi}\smallto\infty$.  Then for 0th order operators, we can regard the interior symbol as a map
\[
	\isym(D) : \scS^\ast X \to \End(\pi^\ast V),
\]
where $\scS^\ast_p X$ is the boundary of the radially compactified fiber $\overline{\scT^\ast_pX}$, and the value of $\isym(D)$ is obtained by taking the limit of the 
leading term in the asymptotic expansion.  Similarly, we extend $\ssym(D)$ to a map
\[
	\ssym(D) : \overline{\scT^\ast_{\pa X} X} \to \End(\pi^\ast V),
\]
again using radial compactification of the fibers.  Since $D$ is 0th order, both symbols are bounded, asymptotic compatibility is just
equality of the limits, and  we can combine the two symbols into a continuous {\em total symbol}
\[
	\tsym(D) : \pa (\overline{\scT^\ast X}) \to \End(\pi^\ast V),
\]
where $\overline{\scT^\ast X}$ is the total space of the compactified scattering cotangent bundle.  It is a manifold with corners, with boundary 
$\pa(\overline{\scT^\ast X})$ consisting of both $\scS^\ast X$ and $\overline{\scT^\ast_{\pa X} X}$, which intersect at the corner $\scS^\ast_{\pa X} X$ 
(see Figure \ref{F:totalspace}).

We can produce a total symbol in the general case as follows.  For every $m \in \R$, we construct a trivial real line bundle $N_m \to \overline{\scT^\ast X}$ whose 
bounded sections consist of functions with asymptotic growth/decay of order $m$.  
Given a scattering metric, a trivialization over the interior is given by the section $\abs{\xi}^m$, that is
\[
	\scT^\ast X \times \R \to N_m : \parens{(p,\xi),t} \stackrel{\cong}{\mapsto} (p,\xi,t\abs{\xi}_p^m).
\]
Symbols of $m$th order operators define bounded sections of $N_m$, which take limiting values at the boundary as above, and we define the {\em renormalized symbols}
as 
\[
	\nisym(D) : \scS^\ast X \to N_m\otimes\End(\pi^\ast V), \quad \nssym(D) : \overline{\scT^\ast_{\pa X} X} \to N_m\otimes\End(\pi^\ast V).
\]
We combine these to obtain the renormalized total symbol
\[
	\ntsym(D) : \pa(\overline{\scT^\ast X}) \to N_m\otimes \End(\pi^\ast V).
\]

Given $D \in \scP^m(X;V)$, we say $D$ is {\em elliptic} when its interior symbol $\isym(D)$ is invertible, as usual.  Elliptic scattering operators satisfy the usual
elliptic regularity conditions, but in general fail to be Fredholm as operators on any natural Sobolev spaces.  $D$ is said to be {\em fully elliptic} 
if {\em both} its interior symbol $\isym(D)$ and its boundary symbol $\ssym(D)$ are everywhere invertible.  This is equivalent 
to invertibility of the renormalized total symbol $\ntsym(D)$, since invertibility does not depend on the chosen trivialization of $N_m$.   If $D$ is fully elliptic, 
it has a unique extension from an operator on $C^\infty_c(X; V)$ to a bounded, Fredholm operator on weighted{\em scattering Sobolev spaces}:
\[
	D : x^\alpha \scH^{m+k}(X;V) \to x^\alpha \scH^k(X;V) \quad \text{is Fredholm}
\]
for all $\alpha \in \R$, where for\footnote{It is straightforward to define scattering Sobolev spaces of all real orders in terms of pseudodifferential operators, 
but we restrict ourselves here to the case of non-negative integer $k$ for simplicity.} $k \in \N_0$,
\[
	x^\alpha \scH^k(X;V) = \set{v = x^\alpha u \;;\; u \in L^2(X;V),\text{ and } Pu \in L^2(X;V)\text{ for all } P \in \scDiff^k(X;V)}.
\]

\begin{rem}
Note that in discussing the total symbols of pseudodifferential operators, we use the notation $\pi : \pa(\overline{\scT^\ast X}) \to X$ to denote the 
projection.  This is not a proper fiber bundle, as the fiber over the interior is a sphere, $\scS^\ast_p X$, while the fiber over a boundary point 
is the (radially compactified) vector space $\overline{\scT^\ast_p X}$.  Nevertheless, the notation is convenient.
\end{rem}

\subsection{Families of operators}\label{S:scattering_families}
Below we shall consider families of scattering pseudodifferential operators, for which we use the following notation.  Suppose $X$ has the structure
of a fiber bundle $X \to Z$, where $Z$ is a compact manifold without boundary, and such that the fiber is a manifold $Y$ with boundary $\pa Y$.  We 
use the notation $X/Z := Y$ to denote the fiber, though there is no real such quotient.  Thus $X$ has boundary $\pa X$ which itself fibers over $Z$, with
fiber $\pa Y$.  $X$ is associated to a principal $\Diffeo(Y)$-bundle $\mathcal{P} \to Z$, from which we derive additional associated bundles.  

%A {\em family of vector bundles} $V$ on $X$, over $Z$, is an associated bundle
%\[
	%V = \mathcal{P} \times_{\Diffeo(Y)} W \to Z
%\]
%where $W \to Y$ is a fixed vector bundle with the obvious $\Diffeo(Y)$ action.  Note that $V \to X$ is a vector bundle with fiber isomorphic to that of the vector
%bundle $W \to Y$.

Suppose we are given a metric on $X$ which restricts to a fixed exact scattering metric on each fiber, for instance by taking a scattering metric on the total space.  
A {\em family of scattering operators on $X \to Z$},
is an operator acting on sections of a vector bundle\footnote{Any vector bundle $V\to X$ can be exhibited as a family of vector bundles
$V = \mathcal{P}\times_{\Diffeo(Y)} W$, where $W \to Y$ is a fixed vector bundle with the same rank as $V$.} 
$V \to X$ which is scattering pseudodifferential in the fiber directions, and smoothly varying in the base.  It is properly defined as a section of the bundle
\[
	\scP^m(X/Z; V) = \mathcal{P} \times_{\Diffeo(Y)} \scP^m(Y;W) \to Z,
\]
where $V$ and $W$ are related by $V = \mathcal{P} \times_{\Diffeo(Y)} W$.

In the simple case that $X$ is a product, $X = Y \times Z$, $Z$ is just a smooth parameter space for the operators, and we recover the case of a single operator 
by taking $Z = \pt$, $X/Z = X = Y$. 

For a family $D \in \scP^m(X/Z; V)$ of operators, the symbol maps have domain $\scT^\ast (X/Z)$, which is the vertical scattering cotangent bundle
\[
	\scT^\ast (X/Z) = \mathcal{P} \times_{\Diffeo(Y)} \scT^\ast Y \to Z,
\]
with fibers isomorphic to the scattering cotangent bundle $\scT^\ast Y$ of the fiber.  The renormalized total symbol is a map
\[
	\ntsym(D) : \pa\parens{\overline{\scT^\ast (X/Z)}} \to N_m\otimes\End(\pi^\ast V)
\]
where now everything is fibered over $Z$, and $\pa(\overline{\scT^\ast (X/Z)})$ has fibers isomorphic to $\pa(\overline{\scT^\ast Y})$.  Note that
$\pi : \pa(\overline{\scT^\ast (X/Z)}) \to X$ is a family of projections modeled on $\pi : \pa(\overline{\scT^\ast Y}) \to Y$, to which the remark at the end
of section \ref{S:scattering_operators} applies.

As in the case of ordinary pseudodifferential operators, a family $D$ of Fredholm operators\footnote{The Fredholm property in the families setting is with respect to
families of scattering Sobolev spaces $\scH^\ast(X/Z; V) = \mathcal{P}\times_{\Diffeo(Y)} \scH^\ast(Y; W)$.\label{footnote:sobolev}} over $Z$ has an 
index $\ind(D) \in K^0(Z)$ given by 
\[
	\ind(D) = [\Ker(D)] - [\Coker(D)] \in K^0(Z),
\]
which is well-defined by a stabilization procedure and Kuiper's theorem \cite{lawson1989spin}.

\begin{figure}[htb]
\begin{center}
\begin{tikzpicture}[scale=2]
	\clip	(-3,2.25) rectangle (1.5, -0.5);

	\path 	(-2,0) coordinate (a)
		(-1,0) coordinate (b)
		(0,0) coordinate (c)
		(1,0) coordinate (d)
		(2,0) coordinate (e)
		(-2,2) coordinate (a1)
		(-1,2) coordinate (b1)
		(0,2) coordinate (c1)
		(1,2) coordinate (d1)
		(2,2) coordinate (e1)
		(-2,-1) coordinate (a2)
		(-1,-1) coordinate (b2)
		(0,-1) coordinate (c2)
		(1,-1) coordinate (d2)
		(2,-1) coordinate (e2);
	\shadedraw [shading=radial, inner color=white, outer color=gray, thick] (a1) rectangle (e2);
	\draw [thick,dashed] 
		(b1) -- (b2)
		(c1) -- (c2)
		(d1) -- (d2);
	\draw [thick] (a) -- (e);
	\fill (a1) circle (1pt) node[above left] {$\scS^\ast_{\pa X} (X/Z)$};
	\node [below left] at (c) {$X$};
	\node [above left] at (c1) {$\scS^\ast (X/Z)$};
	\node [left] at ($(c)!0.5!(c1)$) {$\overline{\scT^\ast (X/Z)}$};
	\node [below left] at (a) {$\pa X$};
	\node [left] at ($(a)!0.5!(a1)$) {$\overline{\scT^\ast_{\pa X} (X/Z)}$};
\end{tikzpicture}	
\caption{The total space $\overline{\scT^\ast (X/Z)}$ and its boundary}
\label{F:totalspace}
\end{center}
\end{figure}
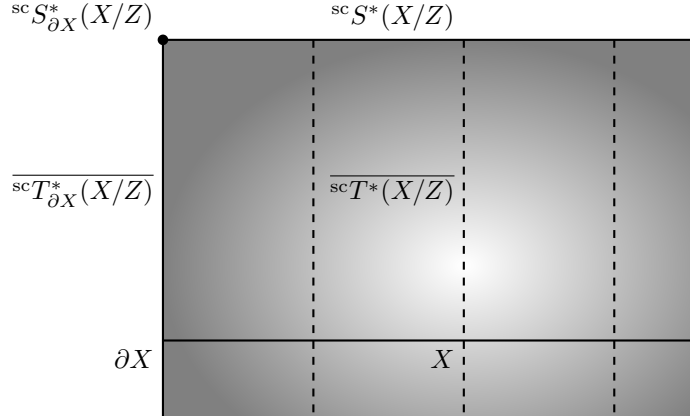

\subsection{The index theorem}\label{S:scattering_index}
The following theorem is one of the primary reasons for using the scattering calculus in our treatment.  Among calculi of pseudodifferential operators on 
manifolds with boundary, the scattering calculus is particularly simple since its boundary symbols are {\em local}\footnote{The trade off is that the condition of 
full ellipticity in the scattering calculus is stronger than the corresponding condition in calculi with less local boundary data.}, and hence give well-defined 
elements in the compactly supported topological $K$-theory of $\scT^\ast X$.  In particular, this allows for the index to be computed by a reduction to the 
Atiyah-Singer index theorem for compact manifolds (\cite{atiyah1971index_IV}, \cite{atiyah1968index_I_III}).  
A proof of Theorem \ref{T:AS_prelim} can be found in \cite{melrose1995geometric} and \cite{melrose2004families}, and a more explicit version of the cohomological formula
(an extension of Fedosov's formula for the classical index theorem) is obtained in \cite{albin2008relative}.

As our applications are to self-adjoint
operators with skew-adjoint potentials, the domain and range bundles of our operators will always be the same, which permits us to write the index formula
below in terms of the {\em odd} Chern character of the total symbol, which in this case defines an element of the odd K-group $K^1(\pa(\overline{\scT^\ast X}))$.  

First let us introduce the notation we use for K-theory.  As usual, we write elements in even K-theory as formal differences of 
vector bundles up to equivalence and stabilization, 
\[
	[V] - [W] \in K^0(M)
\]
and use the notation
\[
	[V,W,\sigma] \in K^0(M, N)
\]
for relative classes, where $\sigma : V_{|N} \stackrel{\cong}{\to} W_{|N}$ is an isomorphism over $N$.  This applies in particular to K-theory with 
{\em compact support} $K^0_c(M) = K^0(M, \infty)$, which is just K-theory relative to infinity with respect to any compactification 
(the one point compactification $M \cup \set{\infty}$ is typically used, though for vector bundles $V$, we use the fiberwise radial compactification $V \cup SV$).

Odd K-theory is represented by homotopy classes of maps $M \to \lim_{n\smallto\infty} \GL(n)$, though we use the notation
\[
	[V, \sigma] \in K^1(M)
\]
as shorthand for the element $[V\oplus V^{\perp}, \sigma\oplus\id] \in K^1(M)$, where $V\oplus V^{\perp} \cong M\times \C^N$, so 
$\sigma\oplus \id : M \to \GL(N)$.  In particular, an element $[V,V,\sigma] \in K^0(M,N)$ with identical domain and range bundles is the image of an element
$[V,\sigma] \in K^1(N)$ in the long exact sequence of the pair $(M,N)$.

Lastly, we define a topological index map for scattering pseudodifferential operators analogous to the classical one.  For $D \in \scP^m(X/Z; V,W)$ fully elliptic,
$[\pi^\ast V,\pi^\ast W, \ntsym(D)] \in K^0_c(\scT^\ast (\mathring{X}/Z))$ is well-defined\footnote{This involves choosing a trivialization of $N_m$, though the
element of K-theory obtained is independent of this choice.}, where $\mathring{X} = X \setminus \pa X$ is the interior of $X$ 
(so the compact support refers both to the fiber {\em and} base directions).  

Given an embedding $\mathring{X} \hookrightarrow \R^N \times Z$ of fibrations over $Z$, we have an induced, K-oriented embedding 
$\scT^\ast (\mathring{X}/Z) \to \R^{2N}\times Z$ into an even dimensional trivial Euclidean fibration.
We define $\topind(D)$ to be the image of $[\pi^\ast V,\pi^\ast W, \ntsym(D)]$ under the composition
\[
	K^0_c(\scT^\ast (\mathring{X}/Z)) \to K^0_c(N(\scT^\ast (\mathring{X}/Z))) \to K^0_c(\R^{2N}\times Z) \cong K^0(Z),
\]
where the first map is the Thom isomorphism onto the normal bundle of $\scT^\ast (\mathring{X}/Z)$ in $\R^{2N}\times Z$, the second is the pushforward 
with respect to the open embedding 
$N(\scT^\ast (\mathring{X}/Z)) \hookrightarrow \R^{2N} \times Z$ in compactly supported K-theory\footnote{Recall that, while cohomology theories 
(K-theory in particular), are contravariant, there is a limited form of covariance with respect to open embeddings 
in any compactly supported theory.  If $i: O \hookrightarrow M$ is an open embedding, we obtain a pushforward map $i_\ast : K^\ast_c(O) \to K^\ast_c(M)$
via the quotient map $M^+ / \infty \to M / (M \setminus O) \cong O^+ / \infty$, where $M^+$ denotes the one point compactification $M \cup \set{\infty}$.}, 
and the last is the Bott periodicity isomorphism (equivalently, the Thom isomorphism for a trivial bundle).
That this is a well-defined map independent of choices follows exactly as in the classical case in \cite{atiyah1971index_IV}.

\begin{thm} \label{T:AS_prelim} \cite{melrose1995geometric}, \cite{melrose2004families}
Let $P \in \scP^m(X/Z; V)$ be a family of fully elliptic scattering pseudodifferential operators.  It is therefore a Fredholm family, with well-defined
index $\ind(P) \in K^0(Z)$, and 
\[
	\ind(P) = \topind(P).
\]
Furthermore, the Chern character of this index is given by the cohomological formula
\[
	\ch(\ind(P)) = p_!\parens{\ch(\tsym(P))\cdot \pi^\ast \Td(X/Z)},
\]
where $p_! : H^\mathrm{even}_c(\scT^\ast (\mathring{X}/Z)) \to H^\mathrm{even}(Z)$ denotes integration over the fibers and $\ch(\tsym(P))$
is shorthand for $\ch_\mathrm{even}([\pi^\ast V, \pi^\ast V, \ntsym(P)]) \in H^\mathrm{even}_c(\scT^\ast (\mathring{X}/Z))$.
%where $q_! : H^\mathrm{odd}(\pa(\overline{\scT^\ast (X/Z)})) \to H^\mathrm{even}(Z)$ is defined below, and 
%$\ch(\tsym(P))$ is shorthand for $\ch_{\mathrm{odd}}([\pi^\ast V, \ntsym(P)])$, with $[\pi^\ast V, \ntsym(P)] \in K^1(\pa(\overline{\scT^\ast (X/Z)}))$, defined
%using any trivialization of $N_m$.
\end{thm}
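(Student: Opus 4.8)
The plan is to follow the embedding strategy of Atiyah--Singer \cite{atiyah1971index_IV}, \cite{atiyah1968index_I_III}, transplanted into the scattering calculus; since this result is quoted from \cite{melrose1995geometric}, \cite{melrose2004families}, I would mainly organize the argument and defer the analytic details. First I would check that a fully elliptic family $P$ is Fredholm: invertibility of $\ntsym(P)$ lets one build a parametrix $Q \in \scP^{-m}(X/Z;V)$ with $PQ-\id$ and $QP-\id$ families of scattering smoothing operators, which are compact on the relevant families of scattering Sobolev spaces, so $\ind(P) \in K^0(Z)$ is defined. Next I would show this index depends only on the class $[\pi^\ast V, \pi^\ast V, \ntsym(P)] \in K^0_c(\scT^\ast(\mathring X/Z))$: homotopy invariance (a homotopy through fully elliptic families gives a norm-continuous path of Fredholm operators), additivity under direct sums, and vanishing of the index when the symbol class is trivial (after stabilization, $P$ may be perturbed by a smoothing family to an invertible operator). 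Together these package the analytic index as a homomorphism $\ind \colon K^0_c(\scT^\ast(\mathring X/Z)) \to K^0(Z)$.

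With the analytic index so packaged, the core of the argument is to establish its two multiplicative properties, exactly as in the classical case: naturality under open inclusions of fibrations $\mathring X \hookrightarrow \mathring X'$ (extend symbols by the identity off $\scT^\ast(\mathring X/Z)$ --- here the \emph{locality} of the scattering boundary symbol is precisely what makes $[\ntsym(P)]$ a compactly supported class, so this makes sense), and compatibility with the Thom isomorphism of a vertical Euclidean bundle $E \to \mathring X$. For the latter I would pull $P$ back to the radial compactification of $E$ and tensor with a fixed ``Bott family'' of fully elliptic operators along the fibres of $E$ whose renormalized symbol represents the Thom class and whose fibrewise index is $[\C]-[0]$; multiplicativity of the index then shows $\ind$ intertwines the symbol-level Thom isomorphism with the identity on $K^0(Z)$. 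Granting this, I would choose a fibrewise embedding $\mathring X \hookrightarrow \R^N \times Z$, inducing a $K$-oriented embedding $\scT^\ast(\mathring X/Z) \hookrightarrow \R^{2N}\times Z$ with complex normal bundle $\nu$, and chase $[\pi^\ast V,\pi^\ast V,\ntsym(P)]$ through
\[
	K^0_c(\scT^\ast(\mathring X/Z)) \xrightarrow{\ \cong\ } K^0_c(\nu) \hookrightarrow K^0_c(\R^{2N}\times Z) \cong K^0(Z);
\]
the first two maps preserve $\ind$ by the properties just described, and on $\R^{2N}\times Z$ the analytic index of the operator corresponding under Bott periodicity to a class is computed directly and is the Bott map. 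The composite is by definition $\topind$, yielding $\ind(P)=\topind(P)$.

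For the cohomological formula I would apply the Chern character, using that it relates the $K$-theory Thom isomorphism to the cohomological Gysin homomorphism via multiplication by the Todd class of the complexified normal bundle. Tracing the same chain of maps, the trivial Euclidean directions contribute trivially and the remaining Todd factors combine, by multiplicativity of $\Td$, into $\pi^\ast\Td(X/Z)$ along $\scT^\ast(\mathring X/Z)$ (as in the statement); one is left with $\ch(\ind P) = p_!\big(\ch_\mathrm{even}([\pi^\ast V,\pi^\ast V,\ntsym(P)])\cdot\pi^\ast\Td(X/Z)\big)$, which is Fedosov's form of the index density, cf.\ \cite{albin2008relative}.

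The step I expect to be the main obstacle is the construction of the ``Bott family'' inside the scattering calculus and the verification that it is fully elliptic with the correct fibrewise index, so that the analytic index genuinely commutes with the Thom isomorphism; in the Atiyah--Singer setting this is the Dolbeault-complex computation along the fibres, whereas in the scattering setting one must check that radial compactification of a Euclidean bundle carries exactly the scattering structure for which these operators are fully elliptic, and control their mapping properties. This is the content of the analysis in \cite{melrose1995geometric}, \cite{melrose2004families}; everything else is formal once the calculus of \cite{melrose1994spectral} is in hand.
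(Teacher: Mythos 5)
The paper does not give its own proof of Theorem \ref{T:AS_prelim}; it cites \cite{melrose1995geometric} and \cite{melrose2004families} for the analytic index theorem and \cite{albin2008relative} for the Fedosov-style cohomological refinement, so there is no internal argument to compare against. That said, your outline matches the standard route taken in those references: parametrix from full ellipticity gives Fredholmness; homotopy invariance, additivity, and triviality on trivial symbol classes package $\ind$ as a homomorphism $K^0_c(\scT^\ast(\mathring X/Z)) \to K^0(Z)$; naturality under open embeddings (where, as you note, locality of the scattering boundary symbol is what makes $[\ntsym(P)]$ compactly supported) plus a Thom/Bott multiplicativity statement reduce to the Euclidean case; and the Chern character together with the Todd multiplicativity of the complexified normal bundle gives the cohomological formula. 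You have also correctly identified the main technical burden --- constructing a fully elliptic ``Bott family'' on the radial compactification of a Euclidean bundle within the scattering calculus and verifying its fibrewise index --- which is indeed the content that \cite{melrose1995geometric} supplies (the radial compactification of a vector bundle carries a natural scattering structure, and the relevant Euler/Bott operator is fully elliptic there). Nothing in your sketch is off track; it is a faithful compression of the argument the paper is appealing to.
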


Since $[\pi^\ast V, \pi^\ast V, \ntsym(P)] \in K^0_c(\scT^\ast (\mathring{X}/Z))$ is in the image of $K^1(\pa(\overline{\scT^\ast(X/Z)}))$, we can reformulate
the above in terms of the odd Chern character as follows.  First, we define a generalized fiber integration map 
$q_! : H^\mathrm{odd}(\pa(\overline{\scT^\ast (X/Z)})) \to H^\mathrm{even}(Z)$, where $q$ is the composition 
$q : \pa(\overline{\scT^\ast (X/Z)}) \stackrel{\pi}{\to} X \to Z$.  Of course, $q$ is not properly a fibration as per the remark at the end of 
\ref{S:scattering_operators}; it really consists of two fibrations 
$q_1 : \scS^\ast (X/Z) \to Z$ and $q_2 : \overline{\scT^\ast_{\pa X} (X/Z)} \to Z$, with an identification of their common boundary, which is the fibration 
$\scS^\ast_{\pa X} (X/Z) \to Z$.  We define $q_!$ as the sum
\[
	q_! \mu = (q_1)_! \mu + (q_2)_! \mu
\]
pulling back $\mu \in H^\mathrm{odd}(\pa(\overline{\scT^\ast (X/Z)}))$ as appropriate in each of the summands; it is well-defined on cohomology since if $\mu = d\alpha$ 
is exact (or more generally fiberwise exact), 
\[
	(q_! d\alpha)(z) = \int_{\parens{\overline{\scT^\ast (X/Z)}}_z}d\alpha + \int_{\parens{\scS^\ast (X/Z)}_z}d\alpha = 
	\int_{\parens{\scS^\ast_{\pa X} (X/Z)}_z} \alpha - \int_{\parens{\scS^\ast_{\pa X} (X/Z)}_z} \alpha = 0
\]
by Stokes' Theorem, since $\overline{\scT^\ast (X/Z)}$ and $\scS^\ast (X/Z)$ share the common boundary $\scS^\ast_{\pa X} (X/Z)$ but with opposite orientation.

Now, since $q_!$ factors as the composition of the connecting map $H^{\mathrm{odd}}(\pa(\overline{\scT^\ast (X/Z)})) \to H^{\mathrm{even}}_c(\scT^\ast (\mathring{X}/Z))$
with the even fiber integration map $p_! : H^{\mathrm{even}}_c(\scT^\ast (\mathring{X}/Z)) \to H^{\mathrm{even}}(Z)$, and since the connecting maps in
even/odd cohomology and K-theory intertwine the even/odd Chern character maps, we obtain the following odd version of
the cohomological formula.

\begin{cor} \label{C:AS}
Let $P \in \scP^m(X/Z; V)$ be a family of fully elliptic scattering pseudodifferential operators as above.  Then
\[
	\ch(\ind(P)) = q_!\parens{\ch_{\mathrm{odd}}(\tsym(P))\cdot \pi^\ast \Td(X/Z)},
\]
where $\tsym(P)$ is short for $[\pi^\ast V, \ntsym(P)] \in K^1(\pa(\overline{\scT^\ast (X/Z)}))$, defined using any trivialization of $N_m$.

In the special case of a single operator $P \in \scP^m(X; V)$, the index is an integer $\ind(P) \in \Z$, and we have
\[
	\ind(P) = \int_{\pa(\overline{\scT^\ast X})} \ch_{\mathrm{odd}}(\tsym(P))\cdot \pi^\ast \Td(X).
\]
\end{cor}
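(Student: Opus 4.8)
The plan is to obtain Corollary~\ref{C:AS} directly from Theorem~\ref{T:AS_prelim} by transporting its index formula through the connecting homomorphisms of the pair $\parens{\overline{\scT^\ast(X/Z)},\,\pa(\overline{\scT^\ast(X/Z)})}$. First I would record the identification $\overline{\scT^\ast(X/Z)}\setminus\pa(\overline{\scT^\ast(X/Z)})=\scT^\ast(\mathring{X}/Z)$, so that the compactly supported K-theory and cohomology of the interior are the relative theories of this pair,
\[
	K^0_c(\scT^\ast(\mathring{X}/Z))\cong K^0\parens{\overline{\scT^\ast(X/Z)},\,\pa(\overline{\scT^\ast(X/Z)})},\quad H^{\mathrm{even}}_c(\scT^\ast(\mathring{X}/Z))\cong H^{\mathrm{even}}\parens{\overline{\scT^\ast(X/Z)},\,\pa(\overline{\scT^\ast(X/Z)})}.
\]
Since the domain and range bundles of $P$ agree, the symbol class $[\pi^\ast V,\pi^\ast V,\ntsym(P)]\in K^0_c(\scT^\ast(\mathring{X}/Z))$ is, as noted in the K-theory conventions above, the image under the K-theory connecting map $\delta_K$ of the pair of the odd symbol class $\tsym(P):=[\pi^\ast V,\ntsym(P)]\in K^1(\pa(\overline{\scT^\ast(X/Z)}))$, this being independent of the trivialization of $N_m$ used.

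Next I would apply the Chern character. The even and odd Chern characters form a natural transformation intertwining the long exact sequences of the pair in K-theory and in rational cohomology, so $\ch_{\mathrm{even}}\circ\delta_K=\delta_H\circ\ch_{\mathrm{odd}}$, where $\delta_H:H^{\mathrm{odd}}(\pa(\overline{\scT^\ast(X/Z)}))\to H^{\mathrm{even}}_c(\scT^\ast(\mathring{X}/Z))$ is the cohomology connecting map. Hence the class $\ch_{\mathrm{even}}([\pi^\ast V,\pi^\ast V,\ntsym(P)])$ that enters Theorem~\ref{T:AS_prelim} equals $\delta_H\parens{\ch_{\mathrm{odd}}(\tsym(P))}$. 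Moreover $\pi^\ast\Td(X/Z)$ is pulled back from $X$ along the projection defined on all of $\overline{\scT^\ast(X/Z)}$, so by the module property of the connecting homomorphism (its long exact sequence being one of $H^\ast(\overline{\scT^\ast(X/Z)})$-modules, and $\Td(X/Z)$ being of even degree so that no sign intervenes), multiplication by $\pi^\ast\Td(X/Z)$ commutes with $\delta_H$. Substituting into Theorem~\ref{T:AS_prelim} then gives
\[
	\ch(\ind(P))=p_!\parens{\delta_H\parens{\ch_{\mathrm{odd}}(\tsym(P))}\cdot\pi^\ast\Td(X/Z)}=p_!\parens{\delta_H\parens{\ch_{\mathrm{odd}}(\tsym(P))\cdot\pi^\ast\Td(X/Z)}}.
\]

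Finally I would invoke the factorization established in the discussion preceding the corollary: the generalized fiber integration $q_!$ is by construction the composite $p_!\circ\delta_H$, and is well defined by the Stokes' theorem computation given there. This identifies the right-hand side above with $q_!\parens{\ch_{\mathrm{odd}}(\tsym(P))\cdot\pi^\ast\Td(X/Z)}$, which is the first formula. For a single operator I would specialize to $Z=\pt$: then $\ind(P)\in K^0(\pt)\cong\Z$ is the Fredholm index, the Chern character $K^0(\pt)\cong\Z\to H^0(\pt;\mathbb{Q})\cong\mathbb{Q}$ is the standard inclusion, and $q_!=(q_1)_!+(q_2)_!=\int_{\scS^\ast X}+\int_{\overline{\scT^\ast_{\pa X}X}}=\int_{\pa(\overline{\scT^\ast X})}$ with respect to the boundary orientation of $\pa(\overline{\scT^\ast X})=\scS^\ast X\cup_{\scS^\ast_{\pa X}X}\overline{\scT^\ast_{\pa X}X}$, which yields the stated integral formula.

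The bulk of the analytic and topological content has already been packaged into Theorem~\ref{T:AS_prelim} and the construction of $q_!$, so this argument is essentially bookkeeping; the one point requiring care is the compatibility of orientation conventions, so that $(q_1)_!$ and $(q_2)_!$ add up to honest integration over $\pa(\overline{\scT^\ast X})$ and $\delta_H$ carries the matching sign, which is exactly what the Stokes' theorem identity preceding the corollary verifies. The naturality of the Chern character with respect to the six-term exact sequences and its compatibility with the $H^\ast$-module structure are standard and bring in no new input, so I do not anticipate a genuine obstacle.
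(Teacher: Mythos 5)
Your argument is correct and follows the same route as the paper: transport the symbol class through the connecting homomorphisms of the pair $\parens{\overline{\scT^\ast(X/Z)},\,\pa(\overline{\scT^\ast(X/Z)})}$, use naturality of the Chern character with respect to the long exact sequences, and invoke the factorization $q_! = p_!\circ\delta_H$ established in the discussion preceding the corollary. You are slightly more explicit than the paper about the $H^\ast$-module property of the connecting map that lets you move $\pi^\ast\Td(X/Z)$ across $\delta_H$, but that is a standard point the paper leaves implicit.
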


\section{Callias-Anghel type operators}\label{S:callias}
We shall be concerned with pseudodifferential families $D$ whose symbols are Hermitian, coupled to skew-Hermitian potentials $i\Phi$.  It is actually only 
necessary that $i\Phi$ be skew-Hermitian at infinity, as well as satisfy some compatibility conditions with $D$.  This is more general than the operators
considered in the literature, and we will see in the section following this one why the index is only dependent on these conditions.

Let $V \to X$ be a family of Hermitian complex vector bundles associated to the family of scattering manifolds $X \to Z$.  We will denote the inner product
on $V$ by $\pair{\cdot,\cdot}$.  Let $D \in \scP^m(X/Z;V), m > 0$ be a family of elliptic (but not necessarily {\em fully} elliptic) 
scattering operators with Hermitian symbols, so
\[
	\isym(D) : \scS^\ast (X/Z) \to \GL(\pi^\ast V) \subset \End(\pi^\ast V)
\]
is Hermitian with respect to $\pair{\cdot,\cdot}$ and 
\[
	\ssym(D) : \scT^\ast_{\pa X} (X/Z) \to \End(\pi^\ast V)
\]
is Hermitian but not necessarily invertible.  Of particular interest later will be the case of a family of Dirac operators, for which 
$\ssym(D)(p,\xi) = i \cl(\xi)\cdot$ vanishes at the 0-section over $\pa (X/Z)$ and is therefore {\em never} fully elliptic.

Next let $\Phi$ be a section of $\End(V)$.  Motivated by physics, we refer to $\Phi$ as the {\em potential}.  We will 
assume $\Phi$ satisfies the following conditions over the boundary $\pa X$, which we shall dub {\em compatibility with $D$}.
\begin{enumerate}
\item $\Phi_{|\pa X}$ is Hermitian with respect to $\pair{\cdot,\cdot}$
\item $\Phi_{|\pa X}$ is invertible 
\item $\Phi_{|\pa X}$ commutes with the boundary symbol of $D$, that is
\[
	[\pi^\ast \Phi_{|\pa X}, \ssym(D)] = 0 \in \End(\pi^\ast V) \quad \text{ on $\overline{\scT^\ast_{\pa X} X}$}.
\]
\end{enumerate}
We refer to condition 3 as {\em symbolic commutativity}.

Given $D$ and a compatible potential $\Phi$, the {\em Callias-Anghel type operator}
\[
	P = D + i\Phi \in \scP^m(X/Z; V)
\]
is fully elliptic (and therefore Fredholm on appropriate spaces) by the following elementary lemma.
\begin{lem}\label{L:fundamental}
Let $\alpha$ and $\beta$ be Hermitian sections of the bundle $\End(V) \to M$, and suppose that, over a subset $\Omega \subset M$, we have
$[\alpha,\beta] = \alpha\beta - \beta\alpha  = 0 \in \Gamma(\Omega; \End(V))$.  If either $\alpha$ or $\beta$ is invertible over $\Omega$, then the
combination
\[
	\alpha + i\beta \in \Gamma(\Omega; \GL(V))
\]
is invertible over $\Omega$.  

In particular, if both $\alpha$ and $\beta$ are invertible over $\Omega$, then the combination
\[
	t\alpha + i\,s\beta \in \Gamma(\Omega; \End(V)) \text{ is invertible for all $(s,t) \neq (0,0) \in \R_+^2$}.
\]
\end{lem}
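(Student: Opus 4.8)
The plan is to reduce everything to a pointwise, finite-dimensional linear-algebra statement and exploit simultaneous diagonalizability. Fix a point $p \in \Omega$ and work in the fiber $V_p \cong \C^r$. Since $\alpha(p)$ and $\beta(p)$ are Hermitian and commute, they are simultaneously unitarily diagonalizable: there is an orthonormal basis $e_1,\ldots,e_r$ of $V_p$ with $\alpha(p)e_j = a_j e_j$ and $\beta(p)e_j = b_j e_j$ for real eigenvalues $a_j, b_j$. In this basis $(\alpha + i\beta)(p)$ is diagonal with entries $a_j + i b_j \in \C$, so it is invertible if and only if $(a_j, b_j) \neq (0,0)$ for every $j$. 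If $\alpha(p)$ is invertible then every $a_j \neq 0$, and if $\beta(p)$ is invertible then every $b_j \neq 0$; in either case no eigenvalue $a_j + i b_j$ vanishes, so $(\alpha + i\beta)(p) \in \GL(V_p)$. Since $p \in \Omega$ was arbitrary, $\alpha + i\beta \in \Gamma(\Omega; \GL(V))$. Continuity/smoothness of the inverse is automatic because matrix inversion is smooth on $\GL(r,\C)$ and $\alpha + i\beta$ is a smooth section with values in $\GL(V)$.

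For the second statement, fix $(s,t) \neq (0,0)$ with $s,t \in \R_+$ — actually as written $\R_+^2$ with $(s,t)\neq(0,0)$ means at least one of $s,t$ is strictly positive and both are nonnegative. At a point $p$ with the same simultaneous eigenbasis, $(t\alpha + i s\beta)(p)$ is diagonal with entries $t a_j + i s b_j$. If both $\alpha(p)$ and $\beta(p)$ are invertible then $a_j \neq 0$ and $b_j \neq 0$ for all $j$. If $t > 0$ then $\mathrm{Re}(t a_j + i s b_j) = t a_j \neq 0$, so the entry is nonzero; if $t = 0$ then necessarily $s > 0$ and $\mathrm{Im}(t a_j + i s b_j) = s b_j \neq 0$, so again the entry is nonzero. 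Hence $t\alpha + i s\beta$ is invertible over $\Omega$ for every such $(s,t)$.

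The only point requiring the slightest care is the simultaneous diagonalization: one should note it holds pointwise (at each $p$ separately, with a $p$-dependent basis) and that this suffices, since invertibility is a pointwise condition — one does not need a globally continuous choice of eigenbasis or eigenvalues, which in general does not exist. I do not anticipate any real obstacle here; the lemma is genuinely elementary, and the whole content is the observation that two commuting Hermitian operators $\alpha, \beta$ have $\alpha + i\beta$ invertible exactly when their joint spectrum avoids the origin in $\R^2 \cong \C$, which is forced as soon as one of the two factors alone avoids $0$.
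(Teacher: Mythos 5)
Your proof is correct and matches the paper's argument essentially verbatim: fix a point, simultaneously diagonalize the commuting Hermitian operators, and observe that the diagonal entries $a_j + i b_j$ cannot vanish when either $a_j$ or $b_j$ is forced nonzero. The explicit verification of the $(s,t)$ case is a small but welcome addition, since the paper leaves that to the reader.
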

\begin{proof}
It suffices to consider an arbitrary fiber $V_p$, $p \in \Omega$.  By the assumption that $\alpha$ and $\beta$ are Hermitian, $\alpha(p)$ has purely real
eigenvalues while $i\beta(p)$ has purely imaginary ones.  Since $[\alpha,\beta] = 0$, there is a basis of $V_p$ in which $\alpha(p)$ and 
$\beta(p)$ are simultaneously diagonal; with respect to this basis $\alpha + i\beta$ acts diagonally with eigenvalues of the form $\lambda_j + i\mu_j$
with $\lambda_j, \mu_j \in \R$.  If either $\alpha$ or $\beta$ is invertible, then either $\lambda_j \neq 0$ or $\mu_j \neq 0$ for all $j$; therefore
$\lambda_j + i\mu_j \neq 0 \in \C$ and $\alpha + i\beta$ must be invertible.
\end{proof}

\begin{cor}
The family of scattering operators $P = D + i\Phi$ extends to a family of Fredholm operators\footnote{See the footnote on page \ref{footnote:sobolev}
for the definition of this family of Sobolev spaces.}
\[
	P : x^\alpha \scH^{k+m}(X/Z;V) \to x^\alpha \scH^{k}(X/Z;V) 
\]
for all $k, \alpha$.
\end{cor}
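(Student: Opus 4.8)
The plan is to show that $P = D + i\Phi$ is a \emph{fully elliptic} family of scattering operators; the Fredholm conclusion on all of the weighted scattering Sobolev spaces $x^\alpha\scH^{k+m}(X/Z;V) \to x^\alpha\scH^{k}(X/Z;V)$ then follows at once from the general theory of fully elliptic scattering operators recalled in section \ref{S:scattering_operators}, together with its families refinement embodied in Theorem \ref{T:AS_prelim}. Since $\Phi \in C^\infty(X;\End(V))$ acts by multiplication, $i\Phi \in \scP^0(X/Z;V)$, so $P \in \scP^m(X/Z;V)$, and only its two symbols need to be examined.

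First I would treat the interior symbol. As $\isym$ depends only on the top-order part of an operator and $m > 0$, the $0$th order term $i\Phi$ makes no contribution, so $\isym(P) = \isym(D)$, which is invertible because $D$ is elliptic by hypothesis. Next comes the boundary symbol, which, unlike the interior symbol, involves \emph{all} orders and is additive in $D$ and $i\Phi$; for a multiplication operator the boundary symbol is just the endomorphism itself restricted to $\pa X$, so $\ssym(P) = \ssym(D) + i\pi^\ast\Phi_{|\pa X}$ on $\overline{\scT^\ast_{\pa X}(X/Z)}$. This is precisely the setting of Lemma \ref{L:fundamental}: at each point over the finite part of a fiber, $\alpha := \ssym(D)$ and $\beta := \pi^\ast\Phi_{|\pa X}$ are Hermitian (by the assumption on the symbols of $D$ and by compatibility condition 1), they commute (condition 3, symbolic commutativity), and $\beta$ is invertible (condition 2), so $\alpha + i\beta = \ssym(P)$ is invertible there.

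It remains to check invertibility of the renormalized total symbol $\ntsym(P)$ along the corner $\scS^\ast_{\pa X}(X/Z)$, where $\abs{\xi} \to \infty$ over $\pa X$. There the asymptotic compatibility built into the scattering calculus identifies the leading renormalized behavior of $\ssym(P)$ with $\nisym(D)$ — the bounded term $i\Phi$ contributing nothing after renormalization by $N_m$, as $m > 0$ — and $\nisym(D)$ is invertible. Hence $\ntsym(P)$ is invertible on all of $\pa(\overline{\scT^\ast(X/Z)})$, i.e.\ $P$ is fully elliptic, and the corollary follows for all $k$ and $\alpha$. The step that requires the most care — indeed essentially the only substantive one — is the invertibility of $\ssym(P)$, and this has already been isolated as Lemma \ref{L:fundamental}; the rest is bookkeeping with the orders of the symbols, so I do not anticipate a real obstacle.
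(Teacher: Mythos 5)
Your proposal is correct and follows essentially the same route as the paper's own proof: identify $\isym(P)=\isym(D)$ (invertible by ellipticity of $D$), apply Lemma \ref{L:fundamental} to $\ssym(P)=\ssym(D)+i\pi^\ast\Phi$ using the compatibility conditions, and conclude full ellipticity hence Fredholmness. Your separate check at the corner $\scS^\ast_{\pa X}(X/Z)$ — that the renormalized potential term $\abs{\xi}^{-m}\pi^\ast\Phi$ dies and $\nisym(D)$ takes over — is a point the paper handles in the remark immediately after the proof; folding it in explicitly as you do is a mild improvement in rigor but not a different argument.
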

\begin{proof}
The interior symbol $\isym(P) = \isym(D)$ is invertible on $\scS^\ast (X/Z)$, since $D$ is elliptic. The boundary symbol
$\ssym(P) = \ssym(D + i\Phi) = \ssym(D) + i\pi^\ast \Phi$ is invertible on $\overline{\scT^\ast_{\pa X} (X/Z)}$ by symbolic commutativity, using 
Lemma \ref{L:fundamental} 
with $\alpha = \ssym(D)$, $\beta = \pi^\ast \Phi$ and $\Omega = \overline{\scT^\ast_{\pa X} (X/Z)}$.  $P$ is therefore fully elliptic, and by the theory of 
scattering pseudodifferential operators \cite{melrose1994spectral}, the claim follows.
\end{proof}

\begin{rem} Note how the compatibility of $\isym(P)$ and $\ssym(P)$ is satisfied.  Since $D$ is a family of operators of order $m > 0$, the leading term in the 
asymptotic expansion of $\ssym(P) = \ssym(D) + i\pi^\ast \Phi$ as $\abs{\xi} \to \infty$ is that of $\ssym(D)$, which grows like $\abs{\xi}^m$, 
whereas $\pi^\ast \Phi$ is constant.  In terms of the renormalized symbols and a choice of radial coordinate $\abs{\xi}$,
\[
	\ntsym(P) = \ntsym(D) + i \abs{\xi}^{-m} \pi^\ast \Phi
\]
and the latter term vanishes on $\scS^\ast (X/Z)$.
\end{rem}

\section{Reduction to the corner}\label{S:reduction}
By Corollary \ref{C:AS}, the index of $P$ is determined by the element in the odd K-theory
of $\pa(\overline{\scT^\ast (X/Z)})$ defined by the (renormalized) total symbol $\ntsym(P)$.  The remainder of our work 
consists of reducing this topological datum to one supported at the corner, $\scS^\ast_{\pa X} (X/Z)$. 
To this end, we will abstract the situation somewhat, in order to simplify the notation and 
clarify the concepts involved.  Thus we shall forget, for the time being, that our K-class is coming from the symbol of a family of pseudodifferential operators, 
as well as most of the structure of $\pa(\overline{\scT^\ast (X/Z)})$.

Let $M = \pa(\overline{\scT^\ast (X/Z)})$, and let $N = \scS^\ast_{\pa X} (X/Z)$ be the corner.  The important feature of $N$ is that it is a hypersurface, 
separating $M\setminus N$ into disjoint components $M_1 = \scS^\ast (\mathring{X}/Z)$ and $M_2 = \scT^\ast_{\pa X}(X/Z)$.  Actually, the fact that it is a corner 
is indistinguishable topologically, and we consider it just as a topological hypersurface in $M$.

We assume a trivialization of the line bundle $N_m$ has been chosen, so we identify $\ntsym(P)$ and $\tsym(P)$ and consider the index to be determined by the
element $[\pi^\ast V, \tsym(P)] \in K^1(M)$.  Also, for notational convenience, we will write $V$ instead of $\pi^\ast V$ for the remainder of this section.

Proposition \ref{P:ksetup} clarifies the fundamental symbolic structure of $P$.  We see that its symbol essentially consists of an invertible Hermitian term from $D$
over $M_1$ and an invertible skew-Hermitian term from $i\Phi$ over $M_2$, whose supports overlap in a neighborhood of the corner 
$N$.  The two terms are fundamentally coupled there, in that we cannot separate their supports via any homotopy in $\GL(V)$.  Also note that, were 
the total symbol either {\em entirely} Hermitian or {\em entirely} skew-Hermitian, it would be homotopic to the identity and $P$ would therefore have 
index 0.  Hence the nontriviality of $\ind(P)$ must be encoded by the coupling of the terms near the corner.  Proposition \ref{P:ktheory} confirms this, 
and identifies an element in $K^0(N)$ which captures this coupling.
	
\begin{prop} \label{P:ksetup}
$M$ is covered by two open sets $\tM_1$ and $\tM_2$ such that $\tM_1 \cap \tM_2 \cong N\times I$ where $I$ is a connected, open interval.  Furthermore,
\[
	[V,\tsym(P)] = [V,\chi A + i(1-\chi)B] \in K^1(M),
\]
where $A$ and $B$ are unitary\footnote{at least on $\supp \chi$ and $\supp (1-\chi)$, respectively.}, Hermitian sections of $\GL(V)$ such that 
$[A,B] = 0$ on $\tM_1 \cap \tM_2$, and where $\chi : M \to [0,1]$ is a cutoff function such that 
$\supp \chi \subset \tM_1$ and $\supp(1 - \chi) \subset \tM_2$.  The positive and negative eigenbundles\footnote{Meaning the bundles of positive and negative
eigenvectors.} of $A$ and $B$ coincide, respectively, with those of $\tsym(D)$ and $\pi^\ast\Phi$.
\end{prop}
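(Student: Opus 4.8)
The plan is to read the cover off the product structure of $\overline{\scT^\ast(X/Z)}$ near the corner, and then to establish the $K^1$-identity by an explicit homotopy of $\tsym(P)$ through invertible sections of $\End(V)$, with Lemma \ref{L:fundamental} supplying invertibility at each stage. Near $N = \scS^\ast_{\pa X}(X/Z)$ the manifold with corners $\overline{\scT^\ast(X/Z)}$ is a product whose two boundary hypersurfaces $\scS^\ast(X/Z)$ and $\overline{\scT^\ast_{\pa X}(X/Z)}$ are cut out by $r := |\xi|^{-1}$ and by a boundary defining function $x$ of $X$, so $N$ has a tubular neighbourhood $N\times(-\e,\e)$ in $M = \pa(\overline{\scT^\ast(X/Z)})$ with $N\times(-\e,0]$ lying in $\scS^\ast(X/Z)$ and $N\times[0,\e)$ in $\overline{\scT^\ast_{\pa X}(X/Z)}$. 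I take $\tM_1 = M_1 \cup (N\times(-\e,\e))$ and $\tM_2 = M_2 \cup (N\times(-\e,\e))$; these are open, cover $M$, and satisfy $\tM_1\cap\tM_2 = N\times(-\e,\e)$. Shrinking $\e$, on $N\times[0,\e)$ I may assume that $\tsym(D)$ is Hermitian and invertible --- it agrees with $\isym(D)$ at $N$, which is invertible since $D$ is elliptic --- that $\pi^\ast\Phi$ is Hermitian and invertible (since $\Phi|_{\pa X}$ is), and that these two commute, by symbolic commutativity on $\overline{\scT^\ast_{\pa X}(X/Z)}$; for $\e$ small $\pi^\ast\Phi$ is in addition invertible on the whole collar.

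Over $\scS^\ast(X/Z)$ the symbol is $\tsym(P) = \tsym(D) = \isym(D)$, purely Hermitian and invertible, while over $\overline{\scT^\ast_{\pa X}(X/Z)}$ it has, in the fixed trivialization, the form $\tsym(P) = \tsym(D) + i\,\rho\,\pi^\ast\Phi$ with $\rho \ge 0$ vanishing precisely at $N$. I set $A := \mathrm{sgn}(\tsym(D))$ and $B := \mathrm{sgn}(\pi^\ast\Phi)$, the spectral signs; these are Hermitian and, where the underlying section is invertible, unitary, with the same positive and negative eigenbundles as $\tsym(D)$ and as $\pi^\ast\Phi$ respectively --- so $A$ is unitary on $\scS^\ast(X/Z)$ and near $N$ on $\overline{\scT^\ast_{\pa X}(X/Z)}$, and $B$ is unitary on $\overline{\scT^\ast_{\pa X}(X/Z)}$ and on the whole collar. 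I choose $\chi : M \to [0,1]$ equal to $1$ on $\scS^\ast(X/Z)$, equal to $0$ on $M_2$ outside the collar, and decreasing from $1$ to $0$ within the part $N\times[0,\e)$ of the collar over $\pa X$; then $\supp\chi\subset\tM_1$, $\supp(1-\chi)\subset\tM_2$, and $\supp(1-\chi)$ lies over $\pa X$, where $\tsym(D)$ and $\pi^\ast\Phi$ commute, so $[A,B]=0$ on $\supp(1-\chi)$, while over the rest of $\tM_1\cap\tM_2$, namely $N\times(-\e,0)$, $1-\chi$ vanishes, so there I redefine $B:=A$, obtaining $[A,B]=0$ on all of $\tM_1\cap\tM_2$.

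It remains to homotope $\tsym(P)$ to $\chi A + i(1-\chi)B$ through invertibles, which I do in three overlapping regions. Over $\scS^\ast(X/Z)$, where $\chi\equiv 1$, deform $\tsym(D)$ to $A$ along the spectral path $t\mapsto (1-t)\tsym(D) + t\,\mathrm{sgn}(\tsym(D))$, which stays invertible Hermitian. Over $\overline{\scT^\ast_{\pa X}(X/Z)}$ outside a collar of $N$, where $\chi\equiv 0$ and $\rho>0$, first deform $\tsym(D)$ to $0$ through $s\,\tsym(D) + i\,\rho\,\pi^\ast\Phi$ --- invertible for all $s\in[0,1]$ by Lemma \ref{L:fundamental}, with $\alpha = s\,\tsym(D)$, $\beta = \rho\,\pi^\ast\Phi$ and $[\alpha,\beta]=0$ --- and then deform $i\,\rho\,\pi^\ast\Phi$ to $iB$ along the spectral path. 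Over the part $N\times[0,\e)$ of the collar, where $\tsym(D)$ is invertible and commutes with $\pi^\ast\Phi$, interpolate by
\[
	\sigma_t = \bigl[(1-t)+t\chi\bigr]\,H_t + i\,\bigl[(1-t)\rho + t(1-\chi)\bigr]\,\Phi_t,
\]
with $H_t$ and $\Phi_t$ the spectral paths from $\tsym(D)$ to $A$ and from $\pi^\ast\Phi$ to $B$: at each time at least one scalar coefficient is strictly positive, so $\sigma_t$ is invertible by Lemma \ref{L:fundamental}; moreover $\sigma_0 = \tsym(P)$, $\sigma_1 = \chi A + i(1-\chi)B$, and $\sigma_t$ matches the two bulk homotopies at the two ends of this part of the collar. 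On the thin overlaps all of these are homotopies of $\tsym(P)$ to a common endpoint, hence homotopic rel endpoints (path spaces of $\GL(\C^N)$ with fixed ends are connected), so they glue to a homotopy over $M$, giving $[V,\tsym(P)] = [V,\chi A + i(1-\chi)B] \in K^1(M)$.

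The step I expect to be the genuine obstacle is this collar bookkeeping, together with its interaction with the hypotheses. Symbolic commutativity is assumed only over $\pa X$, which forces $\chi$ to be arranged so that the skew term $i(1-\chi)B$ is confined to the region over $\pa X$ (with $B$ redefined off it), and one must choose $\e$ and $\chi$ carefully so that at every time the Hermitian coefficient of $\sigma_t$ vanishes only where the skew coefficient is nonzero \emph{and} a commuting pair of invertible Hermitian sections is available, so that Lemma \ref{L:fundamental} applies pointwise throughout, and so that the three homotopies really do agree on overlaps. Once these choices are in place, the existence of the cover and the eigenbundle and commutation properties of $A$ and $B$ are formal.
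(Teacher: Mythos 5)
Your overall strategy matches the paper's: construct a cover $\tM_1, \tM_2$ with $\tM_1 \cap \tM_2 \cong N\times I$, define $A$ and $B$ as the spectral signs (equivalently, unitarizations) of $\tsym(D)$ and $\pi^\ast\Phi$, and homotope $\tsym(P)$ to $\chi A + i(1-\chi)B$ through invertibles, with Lemma~\ref{L:fundamental} supplying pointwise invertibility via the commuting Hermitian/skew-Hermitian decomposition. The cover, the definitions of $A$, $B$, $\chi$, and the role of symbolic commutativity are all essentially the same as in the paper; the only cosmetic difference is that your $\tM_1\cap\tM_2 = N\times(-\e,\e)$ straddles the corner and forces the redefinition $B:=A$ on the $\scS^\ast(X/Z)$ side, whereas the paper takes $\tM_2 = M_2$ so that $\tM_1 \cap \tM_2 \subset M_2$, where commutativity of $\tsym(D)$ and $\pi^\ast\Phi$ already holds.

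There is, however, a genuine gap in the gluing step. You assert that because the three local homotopies share endpoints on overlaps, they are homotopic rel endpoints since ``path spaces of $\GL(\C^N)$ with fixed ends are connected.'' This is false: $\pi_1(\GL(N,\C)) \cong \Z$, so the space of paths between two fixed points has infinitely many components, and two such paths need not be homotopic rel endpoints. Moreover, even when such a homotopy of homotopies exists pointwise, one still has to choose it continuously in the overlap variable, which this argument does not address. As stated, the gluing does not go through.

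The gap is easily repaired because you already have a correct global homotopy in hand. Your ``collar'' formula
\[
\sigma_t = \bigl[(1-t)+t\chi\bigr]\,H_t + i\,\bigl[(1-t)\rho + t(1-\chi)\bigr]\,\Phi_t
\]
in fact works on all of $M$: over $\scS^\ast(X/Z)$ the skew coefficient vanishes identically and $\sigma_t = H_t$; over $M_2$ outside $\supp\chi$ it reduces to $(1-t)H_t + i\bigl[(1-t)\rho + t\bigr]\Phi_t$, whose skew part is invertible; and over the transition region the Hermitian part is invertible. The skew-Hermitian factor $\Phi_t$ is needed only where its coefficient is nonzero, which lies in $M_2$ where $\pi^\ast\Phi$ is Hermitian, so the formula is globally well defined, and Lemma~\ref{L:fundamental} applies at every $(p,t)$. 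You should simply delete the three-region decomposition and the appeal to path spaces, and use this formula everywhere. The paper's proof is simpler still: it uses the direct linear interpolation $\sigma_t = (1-t)\tsym(P) + t\bigl(\chi A + i(1-\chi)B\bigr)$, whose Hermitian and skew-Hermitian parts, namely $(1-t)\tsym(D) + t\chi A$ and $(1-t)\rho\,\pi^\ast\Phi + t(1-\chi)B$, commute and never vanish simultaneously, again by Lemma~\ref{L:fundamental}.
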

%\begin{rem} It is only for technical reasons (i.e. to obtain commutativity in an open neighborhood) that $\tM_1 \cap \tM_2$ is {\em homeomorphic} to a neighborhood
%$N\times I$ of $N$ rather than actually being one.  However, morally at least, $\tM_1 \cap \tM_2$ should be regarded as a collar neighborhood
%of the actual corner.
%\end{rem}
\begin{proof}
As remarked at the end of Section \ref{S:callias}, $\tsym(P)$ is equal to $\ssym(D)$ on $M_1$ and to $\isym(D) + \phi\pi^\ast \Phi$ on $M_2$,
where $\phi \sim \abs{\xi}^{-m}$ is a nonnegative real-valued function vanishing on the closure of $M_1$.  In particular, 
$\phi \pi^\ast \Phi$ has the same $\pm$ eigenbundles as $\Phi$ wherever $\phi \neq 0$.

Since $\tsym(D)$ is invertible on $\overline{M_1} = M_1 \cup N$, by ellipticity, it must be invertible on a slightly larger 
neighborhood $\tM_1$.  We set $\tM_2 = M_2$, on which $\Phi$ is self-adjoint, invertible, and commutes with $\ssym(D)$ by the compatibility
assumption.  Shrinking either if necessary, we can assume that $\tM_1 \cap \tM_2 \cong N\times I$.  Let $\chi$ be a cutoff function with properties as above.

Recall that $C \in \GL(n,\C)$ is homotopic in $\GL(n,\C)$ to its unitarization $U(C)$ via
\[
	C_t = C\parens{t(\sqrt{C^\ast C})^{-1} + (1 - t)\id}\qquad U(C) = (C_t)_{t = 1}.
\]
Let $A$ and $B$ be the (generalized) unitarizations of $\tsym(D)$ and $\phi\pi^\ast \Phi$, respectively; thus $A$ is given by
\[
	A(p) = \begin{cases} U(\tsym(D)(p)) & \text{if $\tsym(D)(p)$ is invertible} \\
					0 & \text{otherwise,} \end{cases}
\]
and similarly for $B$.  
Note that while $A$ and $B$ are not necessarily continuous sections of $\End(V)$ (as $\tsym(D)$ may fail to be invertible
off of $\tM_1$ and $\phi\pi^\ast \Phi$ vanishes away from $\tM_2$), $\chi A$ and $(1-\chi)B$ {\em are} continuous and have 
support, respectively, where $\tsym(D)$ (resp. $\phi\pi^\ast \Phi$) is invertible.  

We claim that the homotopy $\sigma_t = (1-t)\tsym(P) + t\parens{\chi A + i(1-\chi)B}$ is through invertible endomorphisms.  
Indeed, at a general point $p \in M$,
\[
	\sigma_t(p) = \left[(1 - t)\tsym(D)(p) + t\chi(p)A(p)\right] + i\left[(1-t) \phi(p) \pi^\ast \Phi(p) + t (1-\chi)(p)B(p)\right].
\]
where the two bracketed terms commute with one another due to symbolic commutativity (where the latter is nonzero), and at least one term is invertible 
for any $p$ and all $t$.  Invertibility of $\sigma_t(p)$ is then immediate from Lemma \ref{L:fundamental}.
\end{proof}

In what follows we will identify $N\times I$ with the set $\tM_1 \cap \tM_2$, and denote its inclusion by $j : N\times I \hookrightarrow M$.
Note that over $N\times I$, $V$ splits as $V = V^+\oplus V^-$ into $\pm 1$ eigenbundles for $A$ (since $A$ is invertible here), and similarly 
$V = V_+\oplus V_-$ into $\pm 1$ eigenbundles for $B$.  Since $A$ and $B$ commute over $N\times I$, these splittings are compatible, giving
\[
	V_{|N\times I} \cong V^+_+\oplus V^-_- \oplus V^+_- \oplus V^-_+.
\]

The following makes use of the pushforward with respect to open embeddings in compactly supported K-theory, and also the Bott periodicity isomorphism
\[
	K^0(N) \stackrel{\cong}{\to} K^1(N\wedge S^1) = K^1_c(N\times I).
\]
%given by multiplication by the Bott element $\beta = [\C, e^{-i\theta}]$ which generates $K^1_c(S^1) = \Z$.  
\begin{prop}\label{P:ktheory}
Let $V_{|N\times I} \cong V^+_+\oplus V^-_- \oplus V^+_- \oplus V^-_+$ be the splitting into joint eigenbundles of $A$ and $B$ where $V^\pm$ denotes the 
$\pm$ eigenbundle of $A$ and $V_\pm$ the $\pm$ eigenbundle of $B$.  

Identify $[V^+_+] = [V^+_+] - [0] \in K^0(N)$ with its image in $K^1_c(N\times I)$ under the Bott isomorphism, and denote by $j_\ast([V^+_+])$ the image 
in $K^1(M)$ of $[V^+_+]$ under the pushforward $j_\ast : K^1_c(N\times I) \to K^1(M, M\setminus(N\times I))$ and the long exact sequence map 
$K^1(M, M\setminus(N\times I)) \to K^1(M)$.
Then
\[
	[V, \chi A + i(1-\chi)B] = j_\ast\parens{[V^+_+]} \in K^1(M),
\]

Alternatively, we could have used any of the bundles $V^\pm_{\pm/\mp}$, which are related by
\[
	j_\ast\parens{[V^+_+]} = j_\ast\parens{[V^-_-]} = - j_\ast\parens{[V^+_-]} = - j_\ast\parens{[V^-_+]}.
\]

\end{prop}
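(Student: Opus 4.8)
The plan is to unwind both sides until they live in the same relative group $K^1(M, M\setminus(N\times I))$, and then to identify them there via an explicit homotopy of clutching data. First I would observe that $\chi A + i(1-\chi)B$ is already invertible away from $N\times I$: on $M\setminus \tM_2$ we have $(1-\chi)=0$ on the relevant region so the symbol is $\chi A$ with $\chi \equiv 1$ near there and $A$ invertible; on $M\setminus\tM_1$ symmetrically the symbol is $i(1-\chi)B$ with $B$ invertible. Hence $[V,\chi A + i(1-\chi)B]$ is the image under $K^1(M, M\setminus(N\times I)) \to K^1(M)$ of a well-defined relative class $[V, V, \chi A + i(1-\chi)B]_{\mathrm{rel}}$ (here I use that domain and range bundles agree, so a $K^1$-relative class is meant), and by excision this relative group is $K^1_c(N\times I)$. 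So the whole statement reduces to the claim that, as elements of $K^1_c(N\times I)$,
\[
	[V, \chi A + i(1-\chi)B] = [V^+_+]
\]
under the Bott isomorphism $K^0(N) \xrightarrow{\cong} K^1_c(N\times I)$.

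The key computation is then purely over $N\times I$, where by the discussion preceding the proposition $V$ splits into the four joint eigenbundles $V^+_+\oplus V^-_-\oplus V^+_-\oplus V^-_+$, and $A$, $B$ act as $\pm 1$ on the respective factors. On $N\times I$ the symbol $\chi A + i(1-\chi)B$ is block-diagonal with four scalar blocks: on $V^+_+$ it is $\chi + i(1-\chi)$; on $V^-_-$ it is $-\chi - i(1-\chi) = -(\chi + i(1-\chi))$; on $V^+_-$ it is $\chi - i(1-\chi)$; on $V^-_+$ it is $-\chi + i(1-\chi) = -(\chi - i(1-\chi))$. As $\chi$ runs over $[0,1]$ along $I$ (with $\chi=1$ at one end of $I$ and $\chi=0$ at the other, say, up to reparametrizing $I$), the function $t\mapsto \chi(t) + i(1-\chi(t))$ traces a path in $\C^\ast$ from $1$ to $i$; completing $I$ to $S^1$ by joining $i$ back to $1$ through the standard generator corresponds exactly to the Bott generator of $K^1_c(I) = K^1(S^1) \cong \Z$. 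This identifies the $V^+_+$-block's contribution with $+[V^+_+]$ under Bott. The $V^-_-$-block gives $\chi + i(1-\chi)$ negated, i.e. the path from $-1$ to $-i$, which is the same loop (rotate by $\pi$) and hence also $+[V^-_-]$; the overall sign from $-1$ versus $+1$ as the ``reference'' endpoint washes out because what matters for the Bott class is the winding, not the basepoint. The $V^+_-$ and $V^-_+$ blocks use $\chi - i(1-\chi)$, the path $1\to -i$, which winds the opposite way, giving $-[V^+_-]$ and $-[V^-_+]$ respectively. Summing, $[V,\chi A + i(1-\chi)B] = [V^+_+] + [V^-_-] - [V^+_-] - [V^-_+]$ as classes that are already equal in $K^0(N)$ up to the relations being asserted — so I would instead argue the single-block statement and note the four-way relation falls out of the same picture.

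To get the clean identity $[V,\chi A + i(1-\chi)B] = j_\ast[V^+_+]$ rather than a four-term sum, the honest step is: homotope the symbol on $N\times I$, rel the endpoints (where it must stay invertible with the prescribed $\pm 1$ values), so that three of the four blocks become constant in $t$ and only the $V^+_+$ block carries the nontrivial loop. This is possible because on $V^-_-$ one can rotate $-(\chi+i(1-\chi))$ to the constant $-1$ through $\C^\ast$ while keeping endpoint values, since that block's loop is nullhomotopic rel endpoints precisely when we are allowed to vary inside $\GL$ of that block alone — wait, it is not nullhomotopic as a loop, but it \emph{is} homotopic to the loop on $V^+_+$ up to sign, and the sign is absorbed by orientation of $I$; carrying this out carefully, and checking that the $V^+_-, V^-_+$ blocks contribute with the opposite orientation of $I$, yields the relation displayed at the end of the proposition and in particular $j_\ast[V^+_+] = -j_\ast[V^+_-]$. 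I expect the main obstacle to be exactly this bookkeeping of orientations and basepoints in the Bott isomorphism: pinning down the precise convention under which $t\mapsto \chi(t)+i(1-\chi(t))$ represents the positive generator of $K^1_c(I)$, and verifying that negating a block or reflecting $I$ each flip the sign consistently, so that the four-term alternating sum collapses to the single term $j_\ast[V^+_+]$. Once the generator convention is fixed, the rest is the routine splitting-and-winding argument sketched above.
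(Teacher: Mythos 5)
Your overall plan matches the paper's: reduce to a class in $K^1_c(N\times I)$ by excision, split $V$ into joint eigenbundles of $A$ and $B$ over $N\times I$, and read off the answer from the winding of scalar clutching data in each block. The first step (invertibility of $\chi A + i(1-\chi)B$ away from $N\times I$, hence a well-defined relative class) is correct.

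The gap is in the winding computation, and it is a real one, not just bookkeeping. The symbol $\chi A + i(1-\chi)B$ takes the value $A$ (so $\pm 1$) at one end of $I$ and $iB$ (so $\pm i$) at the other. These boundary values differ \emph{and differ between blocks}. A class in $K^1_c(N\times I)$ requires the clutching data to be trivialized to a single constant on $M\setminus(N\times I)$, i.e.\ to a single constant at both ends of $I$; without choosing such a trivialization (and the homotopy realizing it), ``winding number'' is not defined, because you are looking at a path, not a loop. When you close the paths by ``joining $i$ back to $1$'', you make this choice in one block but not consistently across all four, which is why you end up with a four-term expression $[V^+_+]+[V^-_-]-[V^+_-]-[V^-_+]$ that is \emph{not} equal to $[V^+_+]\cdot\beta$. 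Indeed, combined with the proposition's own relation $j_\ast[V^+_+]=j_\ast[V^-_-]=-j_\ast[V^+_-]=-j_\ast[V^-_+]$, your sum would push forward to $4\,j_\ast[V^+_+]$, not $j_\ast[V^+_+]$; the circularity is not a presentational wrinkle but a sign the intermediate identity is false.

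The paper resolves this by first constructing an explicit homotopy, through invertibles, from $\chi A + i(1-\chi)B$ to a symbol $\sigma'$ which equals $-\id$ identically outside $N\times I$ and interpolates $-\id\to iB\to A\to -i\id\to -\id$ across $I$ (using a partition of unity $\rho_0,\dots,\rho_4$ and Lemma~\ref{L:fundamental} at each stage to keep invertibility). Once the ends are pinned to the common value $-\id$, each block traces an honest loop in $\C^\ast$, and a direct check shows the loop on $V^+_+$ winds once (with winding $-1$, giving the Bott generator) while the loops on $V^-_-$, $V^+_-$, $V^-_+$ all have winding zero — so only the $V^+_+$ block survives. This is the ``honest step'' you gesture at but do not carry out; without it, there is no way to get from your four-term expression to the claimed single term. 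The alternative relations in the proposition are then obtained by the same argument with the other three choices of trivialization ($\sigma''$, $\sigma'''$, $\sigma''''$), not by manipulating classes in $K^0(N)$. In short: to make this proof work, you must choose the trivialization explicitly, verify the homotopy stays invertible, and then compute the winding of each block as a genuine loop rel the chosen base value.
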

%\begin{rem} The reason for omitting the generator $\beta$ from the notation is that this map $j_\ast : E^\ast(N) \to E^{\ast + 1}(M)$ is natural for any 
%cohomology theory $E^\ast$ with a ring structure, with the generator of the suspension isomorphism 
%$E^\ast(N) \cong E^{\ast + 1}_c(N\times I)$ playing the role of $\beta$.  Any natural ring homomorphism of cohomology theories (such as the Chern character) takes such 
%generators to one another, and thus commutes with $j_\ast$, and it is notationally convenient not to be explicit about such generating elements.
%\end{rem}
\begin{proof}
Let $\sigma = \chi A + i(1 - \chi)B$.  We first trivialize $\sigma$ away from $N\times I$, so that the K-class it defines is compactly 
supported\footnote{Recall that an element $\alpha \in K^1(M)$ in odd K-theory has support in a set $A$ if it is in the image of $K^1(M,M\setminus A)$ with respect
to the long exact sequence of the pair $(M,A)$, and can therefore be represented by an element $[V,\sigma]$ with $\sigma_{|M \setminus A} \equiv \id$.} 
in $N\times I$.

Let $\set{\rho_0,\ldots,\rho_4}$ be a partition of unity satisfying $\supp(\rho_i) \cap \supp(\rho_{j}) = \emptyset$ unless
$i = j$ or $i = j+1$, $\supp(\rho_0) \cap \supp(\chi) = \emptyset$, and, for $i \in \set{1,2,3}$, $\supp(\rho_i) \Subset N\times I = \wt M_1 \cap \wt M_2$.  
In particular, $\rho_0 \equiv 1$ away from $\wt M_1$ with $\supp(\rho_0) \subset \wt M_2$, and $\rho_4 \equiv 1$ away from $\wt M_2$ with $\supp(\rho_4) \subset \wt M_1$.
We claim there is a homotopy through invertible endomorphisms
\[
	\sigma \sim \sigma' = - \rho_0 \id + \rho_1 i B + \rho_2 A - \rho_3 i\id  - \rho_4 \id,
\]
so that $\sigma' \equiv - \id$ on the complement of $N\times I$.  Indeed, such a homotopy is given by
\[
	\sigma_t = \begin{cases} - t\rho_0 \id + (1-\chi) i B + \chi A & 0 \leq t \leq 1 \\
			         - \rho_0 \id + (2 - t)(1 - \chi) i B + (t - 1)\rho_1 i B + \chi A & 1 \leq t \leq 2 \\
				 - \rho_0 \id + \rho_1 i B + \chi A - (t - 2)(\rho_3 + \rho_4)i \id & 2 \leq t \leq 3 \\
				 - \rho_0 \id + \rho_1 i B + (4 - t)\chi A + (t - 3)\rho_2 A - (\rho_3 + \rho_4) i \id & 3 \leq t \leq 4 \\
				 - \rho_0 \id + \rho_1 i B + \rho_2 A - \rho_3 i \id - \rho_4\parens{(5 - t) i \id + (t - 4)\id} & 4 \leq t \leq 5 \end{cases}
\]
Note that for each $t$, $\sigma_t$ is invertible by Lemma \ref{L:fundamental} and the support conditions on $\set{\chi,\rho_0,\ldots,\rho_4}$.

Since $\sigma' \equiv - \id$ on $M \setminus (N\times I)$ (while we have trivialized $\sigma'$ by $-\id$ away from $N\times I$ instead of $+\id$, the two are equivalent
up to homotopy; indeed $\sigma' \sim - \sigma'$ for any clutching function), it is now evident that $[V,\sigma] = [V,\sigma']$ is in the image of 
$K^1(M,M\setminus(N\times I))$.

By identifying ends of the interval $I$, we see that $\sigma'$ defines a map
\[
	\sigma'_{|N\times S^1} = \begin{pmatrix} \sigma_1(\theta) & & & \\ & \sigma_2(\theta) & & \\ && \sigma_3(\theta) & \\ &&& \sigma_4(\theta) 
		\end{pmatrix}\qquad 
		\sigma_i : S^1 \to \C \backslash \set{0}.
\]
which is diagonal with respect to the splitting $V_{|N\times I} \cong V^+_+\oplus V^-_- \oplus V^+_- \oplus V^-_+$, with scalar entries (since $A$ and $B$ are unitary) 
independent of $N$, whose winding numbers are easily determined.

Indeed, by considering the effect of multiplication by $\sigma_i(t)$ as $\sigma'(t)$ passes from $-\id$, to $iB$, to $A$, to $-i\id$ and then back to $-\id$, 
we see that $\wn(\sigma_2) = \wn(\sigma_3) = \wn(\sigma_4) = 0$ and $\wn(\sigma_1) = -1$.  Thus there are homotopies
\[
	\sigma_i \sim \tilde \sigma_i \equiv 1, \quad i = 2, 3, 4, \quad \text{ and }\quad \sigma_1(\theta) \sim \tilde \sigma_1(\theta) = e^{-i\theta}.
\]
which, taken to be the diagonal elements of a matrix, define a homotopy $\sigma' \sim \tilde \sigma$.
Restricting to $N\times I$, we see
\[
	K^1_c(N\times I) = K^1(N\wedge S^1) \ni j^\ast[V,\tilde \sigma] 
	= [V^+_+\oplus V^-_-\oplus V^+_-\oplus V^-_+, e^{-i\theta}\oplus\id\oplus \id \oplus \id] = [V^+_+, e^{-i\theta}],
\]
which is just the image $[V^+_+]\cdot \beta \in K^1_c(N\times I)$ of $[V^+_+] \in K^0(N)$ under Bott periodicity, where 
$\beta = [\C, e^{-i\theta}] \in K^1(S^1) \cong K^1_c(I)$ is the Bott element.

Finally, since $[V,\sigma] = [V,\tilde \sigma] \in K^1(M))$ is in the image of $j_\ast : K^1_c(N\times I) \to K^1(M, M\setminus(N\times I)) \to K^1(M)$,
we obtain
\[
	[V,\sigma] = j_\ast ([V^+_+])
\]
as claimed.

Similar proofs, using initial trivializations to 
\[
	\sigma \sim \sigma'' = \rho_0 \id + \rho_1 i B + \rho_2 A + \rho_3 i \id + \rho_4 \id,
\]
\[
	\sigma \sim \sigma''' = \rho_0 \id + \rho_1 iB + \rho_2 A - \rho_3 i\id + \rho_4 \id,
\]
and 
\[
	\sigma \sim \sigma'''' = - \rho_0 \id + \rho_1 iB + \rho_2 A + \rho_3 i\id  - \rho_4 \id,
\]
give $[V,\sigma] = j_\ast([V^-_-])$, $[V,\sigma] = -j_\ast([V^-_+])$, and $[V,\sigma] = -j_\ast([V^+_-])$, respectively.
\end{proof}

\section{Results}\label{S:results}
We now present our main results.  To simplify notation, we drop the ``sc'' labels in the remainder of the paper, identifying 
$\scT^\ast (X/Z)$ with $T^\ast (X/Z)$ via a (non-canonical) isomorphism, which is unique up to homotopy.

\begin{thm} \label{T:main}
Given an elliptic family of scattering pseudodifferential operators $D \in \scP^m(X/Z;V)$ with Hermitian symbols, and a compatible
family of potentials $\Phi \in C^\infty(X; \End(V))$ as defined in section \ref{S:callias}, 
the family $P = D + i\Phi$ is fully elliptic, and extends to a Fredholm family with index satisfying
\[
	\ch(\ind(P)) = p_!(\ch(V^+_+)\cdot \pi^\ast \Td(\pa X/Z)),
\]
where $p_! : H^\mathrm{even}(S^\ast_{\pa X} (X/Z)) \to H^\mathrm{even}(Z)$ denotes integration over the fibers, $V^+_+ \to S^\ast_{\pa X} (X/Z)$ is the family 
of vector bundles corresponding to the jointly positive eigenvectors of $\tsym(D)_{|S^\ast_{\pa X} (X/Z)}$ and $\pi^\ast\Phi_{|\pa X}$, and $\tsym(D)$ is obtained
from $\ntsym(D)$ using any trivialization of $N_m$.
\end{thm}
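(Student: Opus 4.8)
The plan is to push the purely $K$-theoretic identities of Propositions \ref{P:ksetup} and \ref{P:ktheory} through the cohomological index formula of Corollary \ref{C:AS}. Since $P$ is fully elliptic (Section \ref{S:callias}), that Corollary gives
\[
	\ch(\ind(P)) = q_!\bigl(\ch_{\mathrm{odd}}(\tsym(P))\cdot\pi^\ast\Td(X/Z)\bigr),
\]
with $q : M := \pa(\overline{\scT^\ast(X/Z)}) \xrightarrow{\pi} X \to Z$. Propositions \ref{P:ksetup} and \ref{P:ktheory} identify the symbol class $[\pi^\ast V, \tsym(P)] \in K^1(M)$ with $j_\ast([V^+_+])$, where $N = \scS^\ast_{\pa X}(X/Z)$ is the corner, $j : N\times I \hookrightarrow M$ is the inclusion of a collar of $N$ (with $N\times I_- \subset \scT^\ast_{\pa X}(X/Z)$ and $N\times I_+ \subset \scS^\ast(\mathring X/Z)$), and $[V^+_+] \in K^0(N)$ is transported through the Bott isomorphism $K^0(N) \xrightarrow{\cong} K^1_c(N\times I)$ and the open-inclusion pushforward. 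First I would simply substitute $\ch_{\mathrm{odd}}(\tsym(P)) = \ch_{\mathrm{odd}}(j_\ast[V^+_+])$.

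Next I would identify this reduced class in cohomology. The odd Chern character is natural under open-inclusion pushforward and intertwines the Bott isomorphism with cohomological suspension (the Bott element of $K^1_c(I)$ having odd Chern character a bump $1$-form $\omega$ of total integral $1$, with the usual orientation conventions). Hence $\ch_{\mathrm{odd}}(j_\ast[V^+_+])$ is supported in the collar $N\times I$ and is represented there by $\mathrm{pr}^\ast\ch(V^+_+)\wedge\omega$, where $\mathrm{pr} : N\times I \to N$ is the projection. For the Todd factor: $N\times I$ retracts onto $N = N\times\{0\}$ and $\pi$ carries $N$ onto $\pa X$, so $\pi|_{N\times I}$ is homotopic to $\mathrm{pr}$ followed by $\pi|_N : N \to \pa X \hookrightarrow X$; by homotopy invariance of pullback, $\pi^\ast\Td(X/Z)$ restricted to the collar is cohomologous to $\mathrm{pr}^\ast(\pi^\ast(\Td(X/Z)|_{\pa X}))$, and since the normal line bundle of $\pa X$ in $X$ is trivial this equals $\mathrm{pr}^\ast(\pi^\ast\Td(\pa X/Z))$. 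Thus the integrand $\ch_{\mathrm{odd}}(\tsym(P))\cdot\pi^\ast\Td(X/Z)$ is cohomologous to a class supported in the collar, equal there to $\mathrm{pr}^\ast\bigl(\ch(V^+_+)\cdot\pi^\ast\Td(\pa X/Z)\bigr)\wedge\omega$.

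Finally I would apply $q_! = (q_1)_! + (q_2)_!$ to this class. Over $z\in Z$ the fibres of $q_1$ and $q_2$ are the manifolds-with-boundary $\scS^\ast Y$ and $\overline{\scT^\ast_{\pa Y}Y}$, which meet the collar in its two halves over $\scS^\ast_{\pa Y}Y$; fibrewise integration of $\mathrm{pr}^\ast(\cdot)\wedge\omega$ over each half yields the integral of $(\cdot)$ over the fibres of $N \to Z$ weighted by $\int_{I_{\geq 0}}\omega$ and $\int_{I_{\leq 0}}\omega$, and these add to $\int_I\omega = 1$ since the two fibres-with-boundary glue compatibly along $\scS^\ast_{\pa Y}Y$ into the closed fibre of $M$. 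This leaves precisely $p_!\bigl(\ch(V^+_+)\cdot\pi^\ast\Td(\pa X/Z)\bigr)$, with $p_! : H^{\mathrm{even}}(S^\ast_{\pa X}(X/Z)) \to H^{\mathrm{even}}(Z)$ the fibre integration, which is the stated formula.

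I expect the only delicate points to be bookkeeping ones: arranging every orientation and sign to line up — the suspension generator, the orientation of the collar $I$, the orientation conventions implicit in $q_!$, and the choice of $V^+_+$ over $V^-_+$, $V^+_-$, $V^-_-$ in Proposition \ref{P:ktheory} — so that the final formula carries the asserted sign, and verifying that the restrictions of the relevant forms to $\scS^\ast(X/Z)$ and $\overline{\scT^\ast_{\pa X}(X/Z)}$, which are not compactly supported in the open parts $M_1, M_2$, are correctly accounted for by integration over fibres with boundary — with the contributions along $N$ now adding, rather than cancelling as in the Stokes computation used to show $q_!$ is well defined.
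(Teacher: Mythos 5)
Your proposal is correct and follows essentially the same route as the paper's proof: apply Corollary \ref{C:AS}, substitute the reduced symbol class $j_\ast[V^+_+]$ from Propositions \ref{P:ksetup}--\ref{P:ktheory}, use naturality of $\ch$ and $\Td$ to localize to the corner, and observe that $q_!$ collapses to $p_!$ over $S^\ast_{\pa X}(X/Z)$. Your explicit bump-form representative $\mathrm{pr}^\ast\ch(V^+_+)\wedge\omega$ and the split of $\int_I\omega$ across the two halves of the collar is a helpful unpacking of the paper's terser statement that ``$j_\ast\ch(V^+_+)$ is supported on $S^\ast_{\pa X}(X/Z)$,'' and your flagged concern about contributions adding rather than cancelling (in contrast with the Stokes argument used to show $q_!$ is well defined) is exactly the right thing to check, but does not represent a gap.
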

\begin{rem} In the case of a single operator $P = D + i\Phi \in \scP^m(X; V)$, the index formula can be written
\[
	\ind(P) = \int_{S^\ast_{\pa X} X} \ch (V^+_+)\cdot\pi^\ast\Td(\pa X).
\]
\end{rem}

\begin{proof} 
By Corollary \ref{C:AS}, 
\[
	\ch(\ind(P)) = q_!\parens{\ch(\tsym(P))\cdot \pi^\ast \Td(X/Z)}.
\]
From Propositions \ref{P:ksetup} and \ref{P:ktheory}, 
$K^1(\pa(\overline{T^\ast (X/Z)})) \ni [\pi^\ast V, \tsym(P)] = j_\ast [V^+_+]$ with $[V^+_+] \in K^0(S^\ast_{\pa X} (X/Z)) \cong K^1_c(S^\ast_{\pa X} (X/Z) \times I)$, 
where $V^+_+$ is the jointly positive eigenbundle of $\tsym(D)$ and $\pi^\ast \Phi$. 

Now, since the Chern character is a natural mapping $\ch : K^\ast \to H^\ast$, we obtain 
\[
	\ch(\tsym(P)) = \ch(j_\ast [V^+_+]) = j_\ast \ch(V^+_+),
\]
where $j_\ast$ is the composition 
$H^{\mathrm{even}}(S^\ast_{\pa X} (X/Z)) \stackrel{\cong}{\to} H^{\mathrm{odd}}_c(S^\ast_{\pa X} (X/Z) \times I) \to H^{\mathrm{odd}}(\pa(\overline{T^\ast (X/Z)}))$.
Since $j_\ast\ch(V^+_+)$ is supported on $S^\ast_{\pa X} (X/Z)$, the integration over the fibers reduces to
\[
	\ch(\ind(P)) = p_!\parens{\ch(V^+_+)\cdot \pi^\ast \Td(X/Z)},
\]
where now $p : S^\ast_{\pa X} (X/Z) \to Z$.  Furthermore, since the Todd class is natural, $\pi^\ast \Td(X/Z)$ factors through 
$S^\ast_{\pa X} (X/Z) \to \pa X \hookrightarrow X$ (all over $Z$) and we obtain $\Td(X/Z)_{|\pa X} = \Td(\pa X/Z)$; this can alternatively be seen by taking 
a product metric at the boundary.
\end{proof}

An interesting case of the above is when the family $V = E\otimes F$ is a tensor product of vector bundles, with $D \in \scP^m(X/Z;E)$ and 
$\Phi \in C^\infty(X;\End(F))$.  In this case, $\ssym(D) \otimes 1$ and $i\otimes \Phi$ commute automatically, it is sufficient that $\Phi_{|\pa X}$ be
invertible and self-adjoint in order to be compatible with $D$.

\begin{thm} \label{T:main_product}
Given $D \in \scP^m(X/Z; E)$ elliptic with self-adjoint symbols, and a compatible potential $\Phi \in C^\infty(X; \End(F))$, the family 
$P = D\otimes 1 + i\otimes \Phi \in \scP^m(X/Z; E\otimes F)$ is fully elliptic, with Fredholm index satisfying
\[
	\ch(\ind(P)) = p_!(\ch(E_+)\cdot\ch(F_+)\cdot \pi^\ast \Td(\pa X/Z)),
\]
where $p_! : H^\mathrm{even}(S^\ast_{\pa X} (X/Z)) \to H^\mathrm{even}(Z)$ denotes integration over the fibers, and $\pi^\ast E = E_+\oplus E_-$ and 
$\pi^\ast F = F_+\oplus F_-$ are the splittings over $S^\ast_{\pa X}(X/Z)$ into positive and negative eigenbundles of $\sigma(D)$ and $\pi^\ast \Phi$,
respectively.
\end{thm}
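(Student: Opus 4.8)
The plan is to deduce this directly from Theorem~\ref{T:main} by specializing to $V = E\otimes F$ and then identifying the jointly positive eigenbundle $V^+_+$ as $E_+\otimes F_+$. First I would observe that $P = D\otimes 1 + i\,(1\otimes\Phi)$ fits the Callias--Anghel framework of Section~\ref{S:callias} with underlying bundle $V = E\otimes F$, operator $D' = D\otimes 1 \in \scP^m(X/Z;E\otimes F)$ (whose symbols are $\isym(D')=\isym(D)\otimes 1$ and $\ssym(D') = \ssym(D)\otimes 1$, hence Hermitian, with $\isym(D')$ invertible on $\scS^\ast(X/Z)$ by ellipticity of $D$), and potential $\Psi = 1\otimes\Phi \in C^\infty(X;\End(E\otimes F))$. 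The potential $\Psi$ is compatible with $D'$: $\Psi_{|\pa X} = 1\otimes\Phi_{|\pa X}$ is Hermitian (1), invertible since $\Phi_{|\pa X}$ is (2), and symbolic commutativity (3) holds automatically because $[\,1\otimes\pi^\ast\Phi_{|\pa X},\, \ssym(D)\otimes 1\,] = 0$, the two endomorphisms acting on complementary tensor factors (this is also where Lemma~\ref{L:fundamental} gives full ellipticity with $\alpha = \ssym(D)\otimes 1$, $\beta = 1\otimes\pi^\ast\Phi$). Therefore Theorem~\ref{T:main} applies: $P$ is fully elliptic, extends to a Fredholm family, and
\[
	\ch(\ind(P)) = p_!\bigl(\ch(V^+_+)\cdot\pi^\ast\Td(\pa X/Z)\bigr),
\]
where $V^+_+ \to S^\ast_{\pa X}(X/Z)$ is the jointly positive eigenbundle of $\tsym(D')_{|S^\ast_{\pa X}(X/Z)} = \tsym(D)\otimes 1$ and $\pi^\ast\Psi_{|\pa X} = 1\otimes\pi^\ast\Phi$.

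The key step is the identification of $V^+_+$. Over $S^\ast_{\pa X}(X/Z)$ the Hermitian endomorphism $\tsym(D)$ is invertible, so $\pi^\ast E = E_+\oplus E_-$ splits into positive and negative eigenbundles, and likewise $\pi^\ast F = F_+\oplus F_-$ since $\Phi_{|\pa X}$ is invertible. The commuting, Hermitian operators $\tsym(D)\otimes 1$ and $1\otimes\pi^\ast\Phi$ are simultaneously diagonalizable, with the joint eigenspace for a pair $(\lambda,\mu)$ of eigenvalues equal to $E_\lambda\otimes F_\mu$, where $E_\lambda$ and $F_\mu$ are the respective eigenspaces of $\tsym(D)$ and $\pi^\ast\Phi$. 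A vector is jointly positive precisely when $\lambda>0$ and $\mu>0$, so
\[
	V^+_+ = \bigoplus_{\lambda>0,\,\mu>0} E_\lambda\otimes F_\mu = \Bigl(\bigoplus_{\lambda>0} E_\lambda\Bigr)\otimes\Bigl(\bigoplus_{\mu>0} F_\mu\Bigr) = E_+\otimes F_+.
\]
Finally, by multiplicativity of the Chern character on tensor products, $\ch(V^+_+) = \ch(E_+\otimes F_+) = \ch(E_+)\cdot\ch(F_+)$, and substituting into the displayed formula yields the claimed identity, with $p_! : H^{\mathrm{even}}(S^\ast_{\pa X}(X/Z)) \to H^{\mathrm{even}}(Z)$ the fiber integration.

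I do not expect a genuine obstacle: the entire argument is bookkeeping of the tensor-product structure layered on top of the already-proved Theorem~\ref{T:main}. The only point requiring a moment's care is the verification that $\tsym(D)$ is an honest invertible Hermitian endomorphism over the corner $S^\ast_{\pa X}(X/Z)$ (the common asymptotic value of $\isym(D)$ and $\ssym(D)$ there), which is what makes $E_+$ a well-defined subbundle and hence $E_+\otimes F_+$ meaningful; this follows from ellipticity of $D$ together with the asymptotic compatibility of the interior and scattering symbols.
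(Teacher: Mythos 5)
Your proof is correct and takes essentially the same approach as the paper: both identify the jointly positive eigenbundle $V^+_+$ with $E_+\otimes F_+$ over $S^\ast_{\pa X}(X/Z)$ and then invoke multiplicativity of the Chern character. The paper phrases the multiplicativity step in terms of the $K$-theory product $[E_+\otimes F_+]=[E_+]\cdot[F_+]$ and naturality of the pushforward $j_\ast$ with respect to products, whereas you apply $\ch(E_+\otimes F_+)=\ch(E_+)\cdot\ch(F_+)$ directly after invoking Theorem~\ref{T:main} as a black box; these are equivalent, and your route is if anything slightly more streamlined.
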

\begin{rem} Note that the splitting of $F$ is actually coming from the base: $F_{|\pa X} = F_+ \oplus F_-$, and $\pi^\ast F = \pi^\ast F_+\oplus\pi^\ast F_-$.
\end{rem}
\begin{proof}
The proof is as above, noting that the splitting of $\pi^\ast V = \pi^\ast (E\otimes F)$ over $S^\ast_{\pa X} (X/Z)$ into
\[
	\pi^\ast V_{|S^\ast_{\pa X} (X/Z)} \cong V^+_+\oplus V^-_- \oplus V^+_- \oplus V^-_+
\]
corresponds to
\[
	\pi^\ast (E\otimes F)_{|S^\ast_{\pa X} (X/Z)} \cong (E_+ \otimes F_+)\oplus(E_-\otimes F_-)\oplus(E_+\otimes F_-)\oplus(E_-\otimes F_+),
\]
with $\pi^\ast E = E_+\oplus E_-$ and $\pi^\ast F = F_+\oplus F_-$ split into $\pm$ eigenbundles of $\tsym(D)$ and $\pi^\ast \Phi$, respectively.  
Then we note that in $K$-theory,
\[
	[E_+\otimes F_+] - [0] = ([E_+] - [0])\cdot([F_+] - [0]) \in K^0(S^\ast_{\pa X} (X/Z)).
\]
Since the pushforward $j_\ast : K_c^\ast(S^\ast_{\pa X} (X/Z)\times I)\to K^\ast(\pa(\overline{T^\ast (X/Z)}))$ behaves naturally with respect to products 
in $K$-theory, and since $\ch([E_+]\cdot[F_+]) = \ch(E_+) \cdot \ch(F_+)$, we have
\[
	\ch(\ind(P)) = p_!\parens{\ch(E_+)\cdot\ch(F_+)\cdot \pi^\ast\Td(\pa X/Z)},
\]
as claimed.
\end{proof}

\subsection{Dirac case}\label{S:dirac}
We further specialize to the case where $D = \dirac{D}$ is a family of (self-adjoint) Dirac operators, acting on sections of a family of Clifford modules $V$.  
In this case, our index formula further reduces to one over $T^\ast (\pa X/Z)$ rather than $S^\ast_{\pa X} (X/Z)$, and is given in terms of a 
related family of Dirac operators on $\pa X$.  

Let $\Cl(X/Z)$ denote the Clifford bundle $\Cl(\scT(X/Z), g)$. 
Suppose that $V\to X$ is a family of Clifford modules with unitary, skew-Hermitian action $\cl : \Cl(X/Z) \to \End(V)$, and a compatible {\em Clifford connection}
$\nabla : C^\infty(X;V) \to C^\infty(X;\scT^\ast (X/Z) \otimes V)$, i.e.
\[
	\nabla (\phi \cdot u) = \nabla^{\mathrm{LC}(g)}\phi \cdot u + \phi \cdot (\nabla u), \qquad \phi \in C^\infty(X;\Cl(X/Z)), u \in C^\infty(X;V),
\]
where $\nabla^{\mathrm{LC}(g)} : C^\infty(X;\Cl(X/Z)) \to C^\infty(X;\scT^\ast (X/Z) \otimes \Cl(X/Z))$ is the natural extension of the Levi-Civita connection 
to $\Cl(X/Z)$.
In analogy to the case of compact manifolds \cite{lawson1989spin}, these data lead to the construction of a canonical scattering Dirac operator
$\dirac{D} \in \scDiff^1(X/Z; V)$, defined 
at $p \in X$ by 
\[
	\dirac{D}_p = \sum_j \cl(e_j) \cdot \nabla_{e_j} : C^\infty_c(\mathring{X};V) \to C^\infty_c(\mathring{X};V), 
		\quad \set{e_j}_{j=1}^n \text{ an orthonormal basis for $\scT_p(X/Z)$},
\]
which is essentially self-adjoint with respect to the $L^2(X;V)$ pairing
\[
	(\dirac{D}u,v) = (u,\dirac{D}v).
\]
Note that $\ssym(\dirac{D})(p,\xi) = i \cl(\xi)\cdot $.  

There is a splitting of $V$ over $\pa X$ coming from the Clifford module structure.  To see this, recall the isomorphism
$$
	\Cl(\R^{n-1}) \cong \Cl^0(\R^n),
$$
$\Cl^0$ denoting the even graded part of the algebra, which is generated by $\R^{n-1} \ni e_i \mapsto e_i\cdot e_n$.  Similarly, given a choice of normal
section $\nu = x^2\pa_x : \pa X \to \scN (\pa X/Z)$, we have a bundle isomorphism
\[
	\Cl(\scT(\pa X/Z), g) \cong \Cl^0(\scT(X/Z),g)_{\pa X} = \Cl^0(X/Z)_{\pa X},
\]
and, by a choice of boundary defining function $x$, we can further identify $\Cl(\scT(\pa X/Z),g)$ and $\Cl(\pa X/Z) \equiv \Cl(T(\pa X/Z), h)$.
The family of Clifford modules $V_{|\pa X}$ has the structure of a $\Z/2\Z$-graded module over this subalgebra $\Cl(\pa X/Z)$.  Explicitly, if
\[
	V_{|\pa X} = V^0 \oplus V^1
\]
is the splitting according to $\pm 1$ eigenspaces of the Hermitian endomorphism $i \cl(\nu)$, then by the anticommutativity of 
$\scT(\pa X/Z)$ and $\scN(\pa X/Z)$ within $\Cl(X/Z)$, it follows that $\Cl(\pa X/Z)$ acts by graded endomorphisms, so
\[
	\Cl^j(\pa X/Z) : V^i \to V^{i+j}, \quad i,j \in \Z/2\Z.
\]
We denote this induced $\Cl(\pa X/Z)$ action by
\[
	\cl_0 : \Cl(\pa X/Z) \to \End_{\mathrm{gr}}(V^0\oplus V^1).
\]
If the Clifford connection $\nabla$ is the lift of a b connection (so $\nabla$ restricts to a connection on $\pa X$), we define the 
{\em induced boundary Dirac operator} $\dirac{\pa} \in \Diff^1(\pa X/Z; V)$ by
\[
	\dirac{\pa}_p = \sum_j \cl_0(e_j) \nabla_{e_j}, \quad \set{e_j}_{j=1}^{n-1} \text{ an orthonormal basis for $T_p(\pa X/Z)$.}
\]

Now assume $\Phi \in C^\infty(X; \End(V))$ is a compatible family of potentials.  In particular, symbolic commutativity at $\pa X$
implies that the positive/negative eigenbundles of $V_{|\pa X}$ with respect to $\Phi$ are themselves Clifford modules:
\[
	[\pi^\ast\Phi, \ssym(\dirac{D})] = [\pi^\ast \Phi,\cl(\cdot)] = 0 \in C^\infty(\scT^\ast (\pa X/Z); \End(V)) 
	\implies V_{|\pa X} = V_+\oplus V_-, \quad \Cl(X/Z)_{|\pa X} : V_\pm \to V_\pm.
\]
Furthermore, it is possible to choose the Clifford connection compatible with this splitting, so that the restriction of the connection
to $\pa X$ preserves $V_\pm$.

By symbolic commutativity, the splittings $V_{|\pa X} = V_+\oplus V_-$ and $V_{|\pa X} = V^0\oplus V^1$ are compatible, so we have
\[
	V_{|\pa X} = V^0_+ \oplus V^0_- \oplus V^1_+ \oplus V^1_-,
\]
with respect to which the induced boundary Dirac operator $\dirac{\pa}$ takes the form
\[
	\dirac{\pa} = \begin{pmatrix} 0 & \dirac{\pa}^-_+ & 0 & 0 \\ \dirac{\pa}^+_+ &0  &0 &0 \\ 0 & 0 & 0 & \dirac{\pa}^-_- \\ 0 & 0&\dirac{\pa}^+_- & 0 
				\end{pmatrix}.
\]
In particular, the operators $\dirac{\pa}^\pm_\pm$ are families of Dirac operators on $\pa X$, a fibration of closed manifolds over $Z$, whose principal symbols
are given by the induced Clifford action $\cl_0$, for instance,
\[
	\sigma(\dirac{\pa}^+_+) = i \cl_0(\cdot) : T^\ast (\pa X/Z) \to \Hom(\pi^\ast V_+^0, \pi^\ast V_+^1).
\]

It remains to show how the splitting of $\pi^\ast V_{|S^\ast_{\pa X} (X/Z)} = V^+ \oplus V^-$ into eigenbundles of $\tsym(\dirac{D})$ is related to the
splitting $V_{|\pa X} = V^0\oplus V^1$.

\begin{lem}\label{L:clifford_k_decomp}
We have
\[
	K^0(S^\ast_{\pa X} (X/Z)) \cong K^0_c(T^\ast (\pa X/Z))\oplus K^0(\pa X),
\]
with respect to which
\[
	[V^+_\pm] = [\pi^\ast V^0_\pm, \pi^\ast V^1_\pm, i\cl_0] + [V^1_\pm].
\]
\end{lem}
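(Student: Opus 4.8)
The plan is to treat Lemma~\ref{L:clifford_k_decomp} as a purely topological statement, depending only on the Clifford bundle structure of $V$ over $\pa X$ and the identity $\tsym(\dirac{D}) = i\cl(\cdot)$; I would therefore discard all analytic content and argue with bundles and $K$-classes. The choice of normal section $\nu = x^2\pa_x$ and of a boundary defining function gives an orthogonal splitting $\scT^\ast(X/Z)|_{\pa X} \cong N^\ast(\pa X/Z)\oplus T^\ast(\pa X/Z) \cong \R\langle dx/x^2\rangle \oplus T^\ast(\pa X/Z)$, so that $S^\ast_{\pa X}(X/Z)$ is the unit sphere bundle of $\R\oplus T^\ast(\pa X/Z)$ over $\pa X$, i.e.\ the fiberwise unreduced suspension of $S^\ast(\pa X/Z)$. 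I write $s_\pm : \pa X \to S^\ast_{\pa X}(X/Z)$ for the two pole sections (the unit covectors $\pm\,dx/x^2$), $E = S^\ast(\pa X/Z)$ for the equatorial sphere bundle, and $D_\pm$ for the two closed hemisphere subbundles, each fiberwise a disk with center $s_\pm(\pa X)$ and boundary $E$; I identify $D_+$ with the disk bundle $\overline{T^\ast(\pa X/Z)}$ via $\eta \mapsto \sqrt{1-|\eta|^2}\,\tfrac{dx}{x^2} + \eta$. Since at $s_+$ the symbol is $i\cl(dx/x^2)$, whose $+1$-eigenbundle is $V^0$, and at $s_-$ it is $-i\cl(dx/x^2)$, whose $+1$-eigenbundle is $V^1$, the bundle $V^+$ restricts to $V^0$ over $s_+$ and to $V^1$ over $s_-$. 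Everything below is carried out fiberwise over $\pa X$, hence over $Z$.

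For the direct sum decomposition I would argue as follows. Restriction to $s_-$ is a retraction of $\pi_0^\ast$, where $\pi_0 : S^\ast_{\pa X}(X/Z)\to\pa X$, so the long exact sequence of the pair $(S^\ast_{\pa X}(X/Z),\,s_-(\pa X))$ splits and $K^0(S^\ast_{\pa X}(X/Z)) \cong K^0(S^\ast_{\pa X}(X/Z),\,s_-(\pa X))\oplus K^0(\pa X)$. Since $s_-(\pa X)\hookrightarrow D_-$ is a fiberwise deformation retract, $K^0(S^\ast_{\pa X}(X/Z),\,s_-(\pa X)) \cong K^0(S^\ast_{\pa X}(X/Z),\,D_-)$; and collapsing $D_-$ (a cofibration, with $D_+\cap D_- = E$) identifies the latter with $\widetilde K^0(D_+/E) = K^0_c(T^\ast(\pa X/Z))$. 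This is the asserted isomorphism, whose $K^0(\pa X)$-component is restriction to $s_-$.

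I would then compute the two components of $[V^+_\pm]$. Its $K^0(\pa X)$-component is $[\,V^+_\pm|_{s_-(\pa X)}] = [V^1_\pm]$ by the first paragraph, giving the second summand. For the $K^0_c(T^\ast(\pa X/Z))$-component I would realize $[V^+_\pm] - \pi_0^\ast[V^1_\pm]$ as a relative class: since $D_-$ retracts onto $s_-$ there is an isomorphism $\psi : V^+_\pm|_{D_-} \to \pi_0^\ast V^1_\pm|_{D_-}$, so $[V^+_\pm,\,\pi_0^\ast V^1_\pm,\,\psi] \in K^0(S^\ast_{\pa X}(X/Z),\,D_-)$ maps to $[V^+_\pm] - \pi_0^\ast[V^1_\pm]$, and its image under collapsing $D_-$ is $[V^+_\pm|_{D_+},\,\pi_0^\ast V^1_\pm|_{D_+},\,\psi|_E] \in K^0(D_+,E)$. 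Likewise $D_+$ retracts onto $s_+$, where $V^+_\pm = V^0_\pm$, so there is an isomorphism $\varphi : \pi_0^\ast V^0_\pm|_{D_+} \to V^+_\pm|_{D_+}$, and the class becomes $[\pi_0^\ast V^0_\pm,\,\pi_0^\ast V^1_\pm,\,\tau]$ with $\tau = \psi|_E\circ\varphi|_E$. The lemma thus reduces to showing that $\tau$ is homotopic through isomorphisms to $i\cl_0 = \sigma(\dirac{\pa}^+_\pm)$.

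This last step is the main obstacle; it is the fiberwise instance of the standard fact identifying the eigenbundle clutching over a sphere with Clifford multiplication. I would make $\varphi$ and $\psi$ explicit via spin rotation: with $\sin\theta = |\eta|$, the unitary $\varphi(\eta) = \cos(\theta/2) + \sin(\theta/2)\,\cl(dx/x^2)\,\cl(\eta/|\eta|)$ extends continuously to $\varphi(0) = \id$ and conjugates $i\cl(dx/x^2)$ to $i\cl\!\bigl(\sqrt{1-|\eta|^2}\,\tfrac{dx}{x^2}+\eta\bigr)$, hence carries $V^0$ onto $V^+$ over $D_+$; the analogous rotation about $-dx/x^2$ furnishes $\psi^{-1}$ over $D_-$. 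Over $E$ both $\varphi|_E$ and $\psi|_E$ equal $\tfrac{1}{\sqrt2}\bigl(1 + \cl(dx/x^2)\cl(\eta)\bigr)$, so $\tau = \psi|_E\varphi|_E = \cl(dx/x^2)\cl(\eta)$, which squares to $-1$ and anticommutes with $i\cl(dx/x^2)$, hence is an isomorphism $V^0\to V^1$; up to the ordering/sign convention fixing $\cl_0$ and the harmless homotopy replacing a clutching function $\sigma$ by $i\sigma$, this is $i\cl_0(\eta)$. Finally, by symbolic commutativity $\pi_0^\ast\Phi|_{\pa X}$ commutes with $i\cl(\xi)$ for every $\xi \in \scT^\ast_{\pa X}(X/Z)$, in particular with $\cl(dx/x^2)$ and each $\cl(\eta)$, so every bundle and every map above splits $\Phi$-equivariantly into $\pm$-eigenbundle parts --- which is why the subscripts $\pm$ may simply be carried along throughout. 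Alternatively, this step is the fiberwise form of the Atiyah--Bott--Shapiro construction, expressing the clutching of the graded pieces $V^0, V^1$ of the $\Cl(\pa X/Z)$-module $V|_{\pa X}$ over $S(\R\oplus T^\ast(\pa X/Z))$ by Clifford multiplication, under the identification of $D_+/E$ with the Thom space of $T^\ast(\pa X/Z)$.
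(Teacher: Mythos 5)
Your proof is correct and follows essentially the same route as the paper: decompose $S^\ast_{\pa X}(X/Z)$ into two hemispherical disk bundles glued along the equator $S^\ast(\pa X/Z)$, split the long exact sequence of the pair via a retraction, restrict to the poles to pick off the eigenbundles $V^0_\pm$, $V^1_\pm$, and identify the clutching function with $i\cl_0$. The only substantive difference is that where the paper simply \emph{asserts} that $V^+_\pm$ is the clutching of $\pi^\ast V^0_\pm$ and $\pi^\ast V^1_\pm$ by Clifford multiplication (after noting the pole restrictions), you make the hemisphere trivializations explicit via the spin rotations $\cos(\theta/2)+\sin(\theta/2)\,\cl(\nu)\cl(\eta/\abs{\eta})$ and compute the transition map $\tau = \cl(\nu)\cl(\eta)$ directly, which tightens the step the paper handles informally.
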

\begin{proof}
We can identify $S^\ast_{\pa X} (X/Z)$ with two copies of $\overline{T^\ast (\pa X/Z)}$, glued along their common boundary $S^\ast (\pa X/Z)$.  With this 
identification, the exact sequence of the pair $(S^\ast_{\pa X} (X/Z), \overline{T^\ast (\pa X/Z)})$ splits since there is an obvious retraction 
$S^\ast_{\pa X} (X/Z) \to \overline{T^\ast (\pa X/Z)}$ (projecting one hemisphere of each fiber onto the other).  Hence we have a split short exact sequence
\[
	0 \to K^0(S^\ast_{\pa X} (X/Z), \overline{T^\ast (\pa X/Z)}) \to K^0(S^\ast_{\pa X} (X/Z)) \to K^0(\overline{T^\ast (\pa X/Z)}) \to 0
\]
Along with the isomorphisms 
\[
	K^0(S^\ast_{\pa X} (X/Z), \overline{T^\ast (\pa X/Z)}) \cong K^0(\overline{T^\ast (\pa X/Z)}, S^\ast (\pa X/Z)) = K^0_c(T^\ast (\pa X/Z)),
\]
and $K^0(T^\ast (\pa X/Z)) \cong K^0(\pa X)$ (by contractibility of the fibers), we obtain
\[
	K^0(S^\ast_{\pa X} (X/Z)) \cong K^0_c(T^\ast (\pa X/Z))\oplus K^0(\pa X),
\]
as claimed.  

We will exhibit the decomposition of $[V^+_+]$ under this splitting; the case of $[V^+_-]$ is similar.  We claim that, as a vector bundle,
\[
	V^+_+ \cong \pi^\ast V^0_+ \cup_{i\cl_0(\cdot)} \pi^\ast V^1_+,
\]
that is, $V^+_+$ is isomorphic to the gluing of the vector bundles $\pi^\ast V^0_+ \to \overline{T^\ast (\pa X/Z)}$ and 
$\pi^\ast V^1_+ \to \overline{T^\ast (\pa X/Z)}$ 
via the clutching function $i \cl_0 : S^\ast (\pa X/Z) \to \Hom(\pi^\ast V^0_+,\pi^\ast V^1_+)$.  

To see this, observe that the two copies of $\overline{T^\ast (\pa X/Z)}$ in $S^\ast_{\pa X} (X/Z)$ retract, respectively, onto the images of $\pa X$ under the inward
and outward pointing conormal sections 
\[
	\nu : \pa X \to N^\ast (\pa X/Z) \subset S^\ast_{\pa X}(X/Z) \text{ and } - \nu: \pa X \to N^\ast (\pa X/Z) \subset S^\ast_{\pa X}(X/Z).
\]

Recall that $(V^+_+)_{\xi}$ is the positive eigenspace of Clifford multiplication $i\cl(\xi)$ at the point $\xi$, whereas 
$(\pi^\ast V^0_+)_{\xi}$ (resp.\ $(\pi^\ast V^1_+)_{\xi}$) is the positive (resp.\ negative) eigenspace 
of Clifford multiplication by the corresponding inward pointing normal, $i\cl(\nu(\pi(\xi)))$.  
Thus, over a point $\nu(p) \in S^\ast_{\pa X} (X/Z)$, we have 
\[
	(V^+_+)_{\nu(p)} = \pi^\ast (V^0_+)_{\nu(p)},
\]
while over the antipodal point $- \nu(p)$, we have 
\[
	(V^+_+)_{(- \nu(p))} = \pi^\ast (V^1_+)_{(- \nu(p))}, 
\]
since, for $v \in (V^+_+)_{(- \nu(p))}$, $v = i \cl( - \nu(p))v = - i \cl(\nu(p))v$.  

The bundle $V^+_+$ can therefore be identified with $\pi^\ast V^0_+$ and $\pi^\ast V^1_+$ over the inward and outward directed copies of 
$\overline{T^\ast (\pa X/Z)}$, respectively, and (since $\cl_0(\xi) = -i \cl(\xi) \sim \cl(\xi)$ on $\pi^\ast V^0$ when $\xi \in S^\ast (\pa X/Z)$), it is clear 
that $i\cl_0 \sim i\cl: S^\ast (\pa X/Z) \to \Hom(\pi^\ast V^0_+, \pi^\ast V^1_+)$ is the transition function gluing them together to produce $V^+_+$, which
finishes the claim.

Consider then the element
\[
	[V^+_+] = [V^+_+] - [\pi^\ast V^1_+] + [\pi^\ast V^1_+] \in K^0(S^\ast_{\pa X}(X/Z)).
\]
From the above, we see that $[V^+_+] - [\pi^\ast V^1_+]$ vanishes over the outward facing copy of $\overline{T^\ast (\pa X/Z)}$, and so maps to the element 
$[\pi^\ast V^0_+, \pi^\ast V^1_+, i \cl_0] \in K^0_c(T^\ast (\pa X/Z))$ in the decomposition above.  Clearly 
$K^0(\overline{T^\ast (\pa X/Z)}) \ni [\pi^\ast V^1_+] \cong [V^1_+] \in K^0(\pa X)$ under contraction along the fibers, and we therefore have
\[
	[V^+_+] \cong [\pi^\ast V^0_+, \pi^\ast V^1_+, i\cl_0] + [V^1_+],
\]
as claimed.
\end{proof}

The element $[\pi^\ast V^0_+, \pi^\ast V^1_+, i \cl_0] \in K^0_c(T^\ast \pa X)$ corresponds precisely to the symbol of $\dirac{\pa}^+_+$, and we obtain the following:

\begin{thm}\label{T:dirac}
Let $\dirac{D} \in \scDiff^1(X/Z; V)$ be a family of scattering Dirac operators acting on a family of Clifford modules $V \to X$, constructed from a lifted b 
Clifford connection, and suppose 
$\Phi \in C^\infty(X;\End(V))$ is a compatible potential.  Let $\dirac{\pa}^+_+ \in \Diff^1(\pa X/Z; V^0_+, V^1_+)$ be the graded part of the induced boundary Dirac
operator $\dirac \pa$ acting on the positive eigenbundle $(V_+)_{|\pa X} = V^0_+\oplus V^1_+$ of $\Phi_{|\pa X}$.
%as discussed in the text preceding Lemma \ref{L:clifford_k_decomp}.
Then $P = \dirac{D} + i \Phi \in \scDiff^1(X/Z; V)$ extends to a family of Fredholm operators and
\[
	\ind(P) = \ind(\dirac{\pa}^+_+) \in K^0(Z).
\]
In particular, we have the index formula
\[
	\ch(\ind(P)) = p_!(\ch(\sigma(\dirac \pa ^+_+))\cdot \pi^\ast\Td(\pa X/Z))
\]
where $p_! : H_c^\mathrm{even}(T^\ast (\pa X/Z)) \to H^\mathrm{even}(Z)$ denotes integration over the fibers and, in the case of a single operator
\[
	\ind(P) = \ind(\dirac{\pa}^+_+) = \int_{T^\ast (\pa X)} \ch(\sigma(\dirac{\pa}^+_+))\cdot\pi^\ast\Td(\pa X).
\]
\end{thm}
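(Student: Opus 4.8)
The plan is to run Theorem~\ref{T:main} at the level of $K$-theory rather than cohomology, substitute the decomposition of Lemma~\ref{L:clifford_k_decomp}, and show that the $K^0(\pa X)$ summand is invisible to the index while the $K^0_c(T^\ast(\pa X/Z))$ summand is carried exactly to $\ind(\dirac{\pa}^+_+)$. Concretely, exactly as in the proof of Theorem~\ref{T:main}, Propositions~\ref{P:ksetup} and~\ref{P:ktheory} give $[\pi^\ast V,\tsym(P)] = j_\ast[V^+_+]$ in $K^1(\pa(\overline{T^\ast(X/Z)}))$, with $[V^+_+]\in K^0(S^\ast_{\pa X}(X/Z))$, and by Theorem~\ref{T:AS_prelim} the index $\ind(P)\in K^0(Z)$ is the image of this class under the connecting map to $K^0_c(T^\ast(\mathring X/Z))$ followed by $\topind$. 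So the whole problem is to compute the image of $j_\ast[V^+_+]$, and Lemma~\ref{L:clifford_k_decomp} lets me write $[V^+_+] = [\pi^\ast V^0_+,\pi^\ast V^1_+, i\cl_0] + [V^1_+]$ under $K^0(S^\ast_{\pa X}(X/Z))\cong K^0_c(T^\ast(\pa X/Z))\oplus K^0(\pa X)$, the first summand being the symbol class $[\sigma(\dirac{\pa}^+_+)]$.

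Next I would kill the $K^0(\pa X)$ part. The boundary $\pa(\overline{T^\ast(X/Z)})$ decomposes as $S^\ast(X/Z)\cup \overline{T^\ast_{\pa X}(X/Z)}$ glued along $S^\ast_{\pa X}(X/Z)$, and the second face is a disk bundle over $\pa X$, hence deformation retracts onto $\pa X$. Consequently any class pulled back from $\pa X$ to $S^\ast_{\pa X}(X/Z)$ extends over that face, so it lies in the kernel of the Mayer--Vietoris connecting map for this decomposition, which is exactly, up to sign, the map $j_\ast\circ\mathrm{Bott}$. Therefore $j_\ast[V^1_+]=0$ and $j_\ast[V^+_+] = j_\ast[\sigma(\dirac{\pa}^+_+)]$, so $\ind(P)$ equals $\topind$ of the image of $[\sigma(\dirac{\pa}^+_+)]$ under the collar inclusion of symbol classes described below.

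The remaining step is to identify that image with $\ind(\dirac{\pa}^+_+)$. Over the collar $\pa X\times[0,1)_x\hookrightarrow X$ the compactified scattering cotangent bundle is $\overline E\times[0,1)_x$, where $E = T^\ast(\pa X/Z)\oplus\underline{\R}$ is the rank-$\dim(X/Z)$ bundle over $\pa X$ whose sphere bundle is the corner $S^\ast_{\pa X}(X/Z)$. Tracing the maps, the composite $K^0_c(T^\ast(\pa X/Z))\to K^0_c(T^\ast(\mathring X/Z))$ coming from $j_\ast\circ\mathrm{Bott}$ and the connecting map is the twofold suspension $K^0_c(T^\ast(\pa X/Z))\cong K^0_c(T^\ast(\pa X/Z)\times\R^2)\cong K^0_c(E\times\R)$ followed by the pushforward along the open inclusion into $T^\ast(\mathring X/Z)$; choosing a fibrewise embedding of $\mathring X$ into a trivial Euclidean fibration over $Z$ which restricts over the collar to one of $\pa X\times[0,1)_x$, naturality of the Thom isomorphisms shows that $\topind_{X/Z}$ precomposed with this map is the topological index map $\topind_{\pa X/Z}$ of the closed fibration $\pa X\to Z$. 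By the Atiyah--Singer families index theorem (the boundaryless case of Theorem~\ref{T:AS_prelim}, since there the scattering calculus is the ordinary one and full ellipticity is ordinary ellipticity), $\topind_{\pa X/Z}([\sigma(\dirac{\pa}^+_+)]) = \ind(\dirac{\pa}^+_+)$. Assembling the three steps gives $\ind(P) = \ind(\dirac{\pa}^+_+)\in K^0(Z)$; the cohomological formula then follows by applying the Chern character together with the cohomological index formula for $\dirac{\pa}^+_+$, and the single-operator statement by setting $Z=\pt$.

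I expect the main obstacle to be this last step: making precise the compatibility of the topological index maps for $X/Z$ and for the boundary fibration $\pa X\to Z$, which amounts to matching up the two chains of Thom isomorphisms through the collar structure and a compatible choice of Euclidean embeddings --- the same kind of bookkeeping that underlies the multiplicativity and excision properties of the index in \cite{atiyah1971index_IV}. The first two steps are essentially formal once Theorem~\ref{T:main}, Lemma~\ref{L:clifford_k_decomp}, and the standard identification of $j_\ast\circ\mathrm{Bott}$ with the Mayer--Vietoris connecting map are in hand.
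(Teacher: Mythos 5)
Your proposal is correct, and its overall architecture — reduce to $j_\ast[V^+_+]$ via Propositions~\ref{P:ksetup} and~\ref{P:ktheory}, decompose $[V^+_+]$ via Lemma~\ref{L:clifford_k_decomp}, kill the $K^0(\pa X)$ summand, and identify the image of the $K^0_c(T^\ast(\pa X/Z))$ summand with $\topind(\dirac\pa^+_+)$ by matching Thom isomorphisms through the collar — is the same as the paper's. The one step where you genuinely diverge is the vanishing of the $K^0(\pa X)$ summand $[V^1_+]$. You argue that $\pi^\ast[V^1_+]$ extends as a K-class over the face $\overline{T^\ast_{\pa X}(X/Z)}$ (since that face retracts onto $\pa X$), hence lies in the kernel of the Mayer--Vietoris connecting map $K^0(S^\ast_{\pa X}(X/Z))\to K^1(\pa(\overline{T^\ast(X/Z)}))$, which you identify (up to sign) with $j_\ast\circ\mathrm{Bott}$; thus $j_\ast[V^1_+]=0$ already in $K^1(\pa(\overline{T^\ast(X/Z)}))$. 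The paper instead works downstream in $K^0_c(T^\ast(\mathring X/Z))$: it observes that the image $\sigma_2=\beta\cdot\pi^\ast[V^1_+]$ is $O(n)$-invariant under fiberwise rotations, hence pulls back from $K^0_c(L)$ for $L=T^\ast(X/Z)/O(n)$, a cone bundle with contractible total space, so $\sigma_2=0$. Your Mayer--Vietoris route is arguably slicker and even proves a nominally stronger vanishing (already in $K^1$ of the boundary, not just in $K^0_c$ of the interior), at the price of having to pin down the identification of $j_\ast\circ\mathrm{Bott}$ with the MV boundary map --- a standard but not entirely trivial piece of bookkeeping. The paper's $O(n)$-invariance argument is more elementary and self-contained, but it requires introducing the auxiliary quotient bundle $L$. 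Both are perfectly valid; the remaining steps of your proposal (the Thom-class matching and the reduction to the Atiyah--Singer families theorem on $\pa X\to Z$) match the paper's argument in substance, with the paper supplying the explicit factorization $\Thom(N_h)=\beta\cdot\Thom(N_g)$ that you gesture at via ``naturality of the Thom isomorphisms.''
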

\begin{proof}
We've seen that $[\pi^\ast V, \pi^\ast V, \tsym(P)] \in K^0_c(T^\ast (\mathring X/Z))$ is the image of $[V^+_+]$ under the composition
$K^0(S^\ast_{\pa X} (X/Z)) \stackrel{j_\ast}{\to} K^1(\pa(\overline{T^\ast (X/Z)})) \to K^0_c(T^\ast (\mathring{X}/Z))$.  In fact, it follows that this image 
is supported in a set homeomorphic to a small product neighborhood of the corner, $S^\ast_{\pa X} (X/Z) \times \R^2 \hookrightarrow T^\ast (\mathring{X}/Z)$, and that
\[
	[\pi^\ast V, \pi^\ast V, \tsym(P)] \cong \beta \cdot[V^+_+] \in K^0_c(S^\ast_{\pa X} (X/Z)\times \R^2)
\]
where $\beta \in K^0_c(\R^2)$ is the Bott element.

From Lemma \ref{L:clifford_k_decomp}, $[V^+_+]$ is a sum of two terms,
\[
	[V^+_+] = [\pi^\ast V^0_+, \pi^\ast V^1_+, \sigma(\dirac \pa ^+_+)] + [V^1_+],
\]
and we define $\sigma_1, \sigma_2 \in K^0_c(T^\ast (\mathring{X}/Z))$ to be the images of these terms.  As above, $\sigma_1$ and $\sigma_2$
are supported in our set $S^\ast_{\pa X} (X/Z)\times \R^2$, where they have the form
\[
	\sigma_1 \cong \beta \cdot [\pi^\ast V^0_+, \pi^\ast V^1_+, \sigma(\dirac \pa ^+_+)], \quad\text{and}\quad \sigma_2 \cong \beta \cdot [\pi^\ast V^1_+].
\]
To prove the theorem, it suffices to show that $\topind(\sigma_1) = \topind(\dirac \pa ^+_+)$ and that $\topind(\sigma_2) = 0$.  In fact, we will
show that $\sigma_2$ vanishes identically.

To see the latter, note that 
\[
	\sigma_2 \cong \beta \cdot [\pi^\ast V^1_+] = \beta \cdot \pi^\ast [V^1_+]
\]
where $\pi^\ast : K^0(\pa X) \to K^0(S^\ast_{\pa X}(X/Z))$.  It follows that $\sigma_2 \in K^0_c(T\ast (\mathring{X}/Z))$ is 
invariant with respect to the action of the rotation group $O(n)$ ($n = \dim (X/Z)$) on the fibers of $T^\ast (\mathring X/Z)$, since $O(n)$ acts
fiberwise on the first factor of $S^\ast_{\pa X}(X/Z) \times \R^2$, and $\pi^\ast [V^1_+]$ is constant on these fibers.

Thus $\sigma_2$ is obtained by pullback of an element 
\[
	\sigma_2' \in K^0_c(L), 
\]
where $L$ is the radial $\R_+$ bundle 
\[
	L = T^\ast (X/Z) / O(n) \to X.
\]
%(Compact support here means K-theory relative to the boundary hypersurfaces $S^\ast (X/Z) / O(n)$ and $T^\ast_{\pa X} (X/Z)/O(n)$.)  
However, $K^0_c(L) \equiv 0$ for any such bundle, since it is equivalent to the (reduced) K-theory of $CX$, the cone on $X$, which is a contractible space.  
Therefore $\sigma'_2 = 0$ which implies $\sigma_2 = 0$.  

It remains to show $\topind(\sigma_1) = \topind(\dirac \pa^+_+)$.  Suppose we are given a K-oriented embedding of fibrations 
$T^\ast (X/Z) \hookrightarrow \R^{2N}\times Z$ coming from an embedding $X \to \R^N \times Z$.  
Let
\[
	g: S^\ast_{\pa X} (X/Z)\times \R^2 \hookrightarrow \R^{2N} \times Z
\]
be the induced embedding of our product neighborhood and let 
\[
	h = g_{|S^\ast_{\pa X}(X/Z) \times \set{(0,0)}}: S^\ast_{\pa X} (X/Z) \hookrightarrow \R^{2N} \times Z
\]
be the induced embedding of $S^\ast_{\pa X} (X/Z)$.  We denote the normal bundles of \mbox{$S^\ast_{\pa X} (X/Z)\times \R^2$} and $S^\ast_{\pa X} (X/Z)$
in $\R^{2N}\times Z$ by $N_g(S^\ast_{\pa X} (X/Z)\times \R^2)$ and $N_h(S^\ast_{\pa X} (X/Z))$, respectively, emphasizing the corresponding embeddings.
Of course
\[
	N_h(S^\ast_{\pa X} (X/Z)) = \R^2\times N_g(S^\ast_{\pa X} (X/Z)\times \R^2)
\]
since our neighborhood is a product.
	
Let 
\[
	f : T^\ast(\pa X/Z) \hookrightarrow S^\ast_{\pa X} (X/Z)
\]
be the open embedding onto the open inward facing open disk bundle as in Lemma \ref{L:clifford_k_decomp}, so
\[
	\sigma_1 \cong \beta \cdot f_\ast[\sigma(\dirac \pa ^+_+)],
\]
where $[\sigma(\dirac \pa^+_+)]$ will be shorthand for $[\pi^\ast V^0_+, \pi^\ast V^1_+, \sigma(\dirac \pa ^+_+)]$.
Finally, note that $h\circ f$ is a K-oriented embedding of $T^\ast (\pa X/Z)$ into a trivial Euclidean fibration which is homotopic (by stereographic projection) 
to the embedding induced by the map $\pa X \to X \to  \R^N \times Z$, and hence is suitable for computing $\topind(\dirac \pa ^+_+)$.

We will make use of the following general fact.  If $V \to B$ is an oriented complex vector bundle 
and $i: A \hookrightarrow B$ is an open embedding of spaces (which induces an open embedding $\tilde i : V_{|A} \to V$), then for any $\alpha \in K^\ast_c(A)$, we have 
\[
	i_\ast(\alpha) \cdot \Thom(V) = \tilde i_\ast \parens{\alpha\cdot \Thom(V_{|A})} \in K^\ast_c(V),
\]
where $\Thom(V) \in K^0_c(V)$ is the K-orientation class (Thom class) generating $K^0_c(V)$ as a module over $K^0(B)$.  As a consequence, we need only to show
that
\[
	\topind(\sigma_1) = \sigma_1 \cdot \Thom(N_g(S^\ast_{\pa X} (X/Z)\times \R^2)) \in K^0_c(\R^{2N}\times Z)
\]
is equivalent to 
\[
	\topind(\dirac \pa^+_+) = \tilde f_\ast \parens{[\sigma(\dirac \pa ^+_+)]\cdot\Thom(N_{h\circ f}(T^\ast (\pa X/Z)))} 
	= f_\ast [\sigma(\dirac \pa ^+_+)] \cdot \Thom(N_h(S^\ast_{\pa X} (X/Z))) \in K^0_c(\R^{2N} \times Z).
\]

However this follows immediately since, on the one hand, $\sigma_1 \cong \beta \cdot f_\ast[\sigma(\dirac \pa^+_+)]$;
and on the other hand, because 
$N_h(S^\ast_{\pa X} (X/Z)) = \R^2\times N_g(S^\ast_{\pa X} (X/Z)\times \R^2)$, we have
\[
	\Thom(N_h(S^\ast_{\pa X} (X/Z))) =  \beta \cdot \Thom(N_g(S^\ast_{\pa X} (X/Z)\times \R^2),
\]
by multiplicativity of the Thom class and the fact that the Thom class of a trivial $\R^2$ bundle is exactly $\beta$.
Thus, we obtain that $\ind(P) = \ind(\dirac \pa ^+_+)$ since $\topind(P) = \topind(\dirac \pa ^+_+)$, and the rest of the proof follows by taking the 
Chern character of both sides.
\end{proof}

Finally, we consider the product Dirac case; that is, assume $V = E \otimes F \to X$ where $\dirac{D} \in \scDiff^1(X/Z; E)$ acts on $E$
and the compatible potential $\Phi \in C^\infty(X;\End(F))$ acts on $F$, and $\dirac D\otimes 1 = \dirac D_{F}$ is obtained by equipping $E\otimes F$
with a tensor product connection.  We form the Callias-Anghel type family
\[
	P = \dirac{D} \otimes 1 + i \otimes \Phi \in \scDiff^1(X/Z; E\otimes F).
\]

As above, the Clifford module $E$ splits over the boundary into $E^0\oplus E^1$, with
\[
	\dirac \pa = \begin{pmatrix} 0 &\dirac\pa^- \\ \dirac\pa^+ & 0 \end{pmatrix},
\]
and $F_{|\pa X} = F_+\oplus F_-$ splits into positive and negative eigenbundles of $\Phi_{|\pa X}$.
\begin{thm}\label{T:dirac_product}
Let $P = \dirac{D}\otimes 1 + i \otimes \Phi \in \scDiff^1(X/Z; E\otimes F)$ as above.  Then $P$ extends to a Fredholm family with index
\[
	\ind(P) = \ind\parens{\dirac\pa^+_{F_+}}
\]
where $\dirac\pa^+_{F_+}$ is the twisted Dirac operator obtained by twisting $\dirac\pa^+ \in \scDiff^1(\pa X/Z; E^0, E^1)$ by $F_+$, the 
positive eigenbundle of $\Phi_{|\pa X}$.
\end{thm}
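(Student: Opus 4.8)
The plan is to reduce this to the non-product Dirac case, Theorem \ref{T:dirac}, by identifying the various eigenbundle splittings over the boundary. First I would observe that $P = \dirac{D}\otimes 1 + i\otimes \Phi$ is a special case of the Callias-Anghel Dirac operator of Theorem \ref{T:dirac}, with Clifford module $V = E\otimes F$: the Clifford action $\cl\otimes 1$ makes $E\otimes F$ a Clifford module, the tensor product connection is a Clifford connection (lifted from a b connection, since it comes from such connections on $E$ and $F$), and $\Phi$ acts as $1\otimes \Phi$. Crucially, $\ssym(\dirac{D}\otimes 1) = i\cl(\cdot)\otimes 1$ commutes automatically with $1\otimes\pi^\ast\Phi$, so the compatibility conditions reduce exactly to $\Phi_{|\pa X}$ being Hermitian and invertible, as claimed in the remark preceding Theorem \ref{T:main_product}. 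Hence Theorem \ref{T:dirac} applies directly and gives $\ind(P) = \ind(\dirac\pa^+_+)$, where $(V_+)_{|\pa X} = V^0_+ \oplus V^1_+$ is the decomposition of the $\Phi_{|\pa X}$-positive eigenbundle of $V = E\otimes F$ according to the $\pm 1$ eigenspaces of $i\cl(\nu)\otimes 1$.

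The key step is then to identify $\dirac\pa^+_+$ with the twisted operator $\dirac\pa^+_{F_+}$. Since $\Phi$ acts only on the $F$ factor, the $\Phi_{|\pa X}$-eigenbundle splitting is $V_{|\pa X} = (E\otimes F_+)\oplus(E\otimes F_-)$, so $V_+ = E\otimes F_+$. On the other hand, the $i\cl(\nu)\otimes 1$ splitting only affects the $E$ factor: $E_{|\pa X} = E^0\oplus E^1$, so $V^0 = E^0\otimes F$ and $V^1 = E^1\otimes F$. Combining, $V^0_+ = E^0\otimes F_+$ and $V^1_+ = E^1\otimes F_+$. The induced Clifford action $\cl_0$ on $V_+$ acts diagonally in the $F_+$ factor (the grading isomorphism $\Cl(\pa X/Z)\cong \Cl^0(X/Z)_{|\pa X}$ involves only the Clifford generators, i.e. the $E$ factor), so $\cl_0^V = \cl_0^E \otimes 1_{F_+}$. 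Likewise the restricted Clifford connection on $V_+ = E\otimes F_+$, chosen compatibly with the splittings, is the tensor product connection built from the restriction of the $E$-connection and the $F_+$-connection. Therefore $\dirac\pa^+_+ = \sum_j \cl_0^E(e_j)\otimes 1 \cdot (\nabla^{E}_{e_j}\otimes 1 + 1\otimes\nabla^{F_+}_{e_j})$ is exactly $\dirac\pa^+$ twisted by $F_+$, i.e. $\dirac\pa^+_{F_+}$.

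I expect the main obstacle to be bookkeeping rather than any deep difficulty: one must check carefully that the two boundary splittings (the $\Phi$-eigenspaces and the $i\cl(\nu)$-eigenspaces) are genuinely independent and hence their intersection decomposes $V_{|\pa X}$ into the four pieces $E^i\otimes F_\pm$ in the expected way, and that the Clifford connection can be chosen simultaneously compatible with both. This is guaranteed in Theorem \ref{T:dirac} (symbolic commutativity makes the splittings compatible, and one may choose the connection to preserve $V_\pm$), and in the product case both splittings are manifestly compatible since they live in different tensor factors. The remaining point is to recognize that "twisting $\dirac\pa^+$ by $F_+$" is precisely the operator obtained from the tensor product connection on $E^0\otimes F_+ \to E^1\otimes F_+$ with symbol $i\cl_0\otimes 1$, which is a standard identification. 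Once these identifications are in place, the index formula $\ind(P) = \ind(\dirac\pa^+_{F_+})$ follows, and one may additionally record the cohomological form $\ch(\ind(P)) = p_!\parens{\ch(\sigma(\dirac\pa^+))\cdot\ch(F_+)\cdot\pi^\ast\Td(\pa X/Z)}$ by applying the families index theorem on $\pa X/Z$ to the twisted operator and using multiplicativity of the Chern character.
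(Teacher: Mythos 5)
Your proposal is correct and follows essentially the same route as the paper: apply Theorem \ref{T:dirac} to $V=E\otimes F$, then identify $V^0_+ = E^0\otimes F_+$, $V^1_+ = E^1\otimes F_+$, and observe that the induced Clifford action (and hence the symbol of $\dirac\pa^+_+$) is $i\cl_0^E\otimes 1 = \sigma(\dirac\pa^+)\otimes 1 = \sigma(\dirac\pa^+_{F_+})$. You spell out the bookkeeping (connection factoring, compatibility of splittings) more explicitly than the paper, which only records the two key identifications, but the argument is the same.
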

\begin{rem} In particular, when $X$ is an odd-dimensional spin manifold and $\dirac D$ is the (self-adjoint) spin Dirac operator 
(i.e.\ constructed using the fundamental representation of $\Cl(X)$ on spinors),
then $\dirac\pa^+ \in \Diff^1(\pa X; S^0, S^1)$ is the graded spin Dirac operator over the boundary, and we obtain
\[
	\ind(\dirac D \otimes 1 + i \otimes \Phi) = \int_{T^\ast \pa X} \ch(F_+) \cdot \hat{ \mathrm{A}}(\pa X),
\]
since $\ch(\sigma(\dirac \pa^+))\cdot\Td(\pa X) = \hat{\mathrm{A}}(\pa X)$ (compare to the formula obtained by R{\aa}de in \cite{rade1994callias}).
\end{rem}
\begin{proof}
The result follows from the previous one, after noting that $V^0_+$ and $V^1_+$ are given by $E^0\otimes F_+$ and $E^1\otimes F_+$, respectively, and that 
the clutching function
\[
	i\cl_0 : S^\ast(\pa X/Z) \to \Hom(\pi^\ast E^0\otimes F_+, \pi^\ast E^1\otimes F_+)
\]
is given by 
$\sigma(\dirac\pa^+)\otimes 1 = \sigma(\dirac \pa^+_{F_+})$.
\end{proof}

%\subsection{Some remarks}

There are a few final remarks to be made:
\begin{itemize}
\item
First, regarding even/odd dimensionality:  in the case of (families of) Dirac operators, $P$ will only have a nonzero index when the dimension $\dim(X/Z)$ of the 
fiber is {\em odd}.  Since the index of $P$ reduces to the index of a family of {\em differential} operators on $\pa X \to Z$, it must vanish when 
$\dim(\pa X/Z) = \dim(X/Z) - 1$ is odd for the usual reason.  Because of this, previous literature on the subject was limited to the index problem on 
odd-dimensional manifolds, though we emphasize that, if $D$ is allowed to be pseudodifferential, $P$ may have nontrivial index even when $\dim(X/Z)$ is even.

\item
Our analysis of clutching data in the Dirac case, which related $[V^+_+] \in K^0(S^\ast_{\pa X} (X/Z))$ to the symbol 
$[\pi^\ast V^0_+, \pi^\ast V^1_+, \sigma(\dirac \pa ^+_+)] \in K^0_c(T^\ast (\pa X/Z)$ of an operator $\dirac \pa^+_+$ on $\pa X$ is equally valid when $D$ is
pseudodifferential.  Indeed, $V^+_+$ can always be written as the clutching of bundles $\pi^\ast V^0_+$ and $\pi^\ast V^1_+$ coming from the base, 
with respect to {\em some} clutching function $f$, and then $\ind(P) = \ind(\delta)$, where $\delta \in \Psi^m(\pa X/Z; V^0, V^1)$ is any elliptic 
pseudodifferential operator whose symbol $\sigma(\delta) = f$.  However, such a choice of $\delta$ is far from canonical without the additional structure of 
the Clifford bundles.
\end{itemize}

\section{Relation to previous results} \label{S:connection}
In \cite{anghel1993index}, N.\ Anghel generalized Callias' original index theorem to the following situation\footnote{See also \cite{rade1994callias} for an independently
obtained proof which addresses the Dirac product case as in section \ref{S:dirac}.} (adapted to our notation):  Let $X$ be a general odd-dimensional, 
non-compact, complete Riemannian manifold (with no particular structure assumed at infinity), with a Clifford module $V \to X$.  Let 
$\dirac{D} : C^\infty_c(X;V) \to C^\infty_c(X;V)$ be a self-adjoint Dirac operator, and $\Phi \in C^\infty(X;\End(V))$ a potential which is assumed to be uniformly 
invertible away from a compact set $K \Subset X$ and such that $[\dirac{D}, \Phi]$ is a uniformly bounded, 0th order operator (in particular, $\Phi$ commutes with 
Clifford multiplication).  First he proves that, for sufficiently large $\lambda > 0$,
\[
	P_\lambda = \dirac D + i \lambda \Phi \quad \text{is Fredholm,}
\]
essentially by showing that $P_\lambda$ and $P_\lambda^\ast$ satisfy what the author likes to call ``{\em injectivity near infinity}'' conditions:  
\[
	\norm{P_\lambda u}_{L^2} \geq c \norm{u}_{L^2} \quad \text{ for all $u \in C^\infty_c(X \setminus K; V)$}
\]
and similarly for $P_\lambda^\ast$.  In \cite{anghel1993abstract} Anghel shows how such conditions are equivalent to Fredholmness for self-adjoint Dirac operators, but 
it is easy to see that his proof generalizes to show that any differential operator $P$, which is injective near infinity along with its adjoint, extends to be Fredholm.

In any case, as in Section \ref{S:dirac}, $V$ splits over $X\setminus K$ into positive and negative eigenbundles of $\Phi$: $V_{|X \setminus K} = V_+\oplus V_-$, and
choosing a compact set $L \Subset X$ such that $K \subset \mathring{L}$ with $\pa L = Y$ a separating hypersurface (compare our earlier situation in which $Y = \pa X$), 
we have further compatible splitting $(V_\pm)_{|Y} = V_\pm^0\oplus V_\pm^1$ according to the decomposition $\Cl(Y) \cong \Cl(X)^0_{|Y}$, with 
$-i \cl(\nu) \equiv (-1)^i : V_\pm^i \to V_\pm^i$ where $\nu$ is a unit normal section.  Choosing appropriate connections, we can again construct an induced
Dirac operator on $Y$,
\[
	\dirac{\pa}_{Y} = \begin{pmatrix} 0 & \dirac{\pa}^-_+ & 0 & 0 \\ \dirac{\pa}^+_+ &0  &0 &0 \\ 0 & 0 & 0 & \dirac{\pa}^-_- \\ 0 & 0&\dirac{\pa}^+_- & 0 
				\end{pmatrix},
\]
and Anghel proves that
\[
	\ind({\dirac D + i \lambda \Phi}) = \ind({\dirac{\pa}_+^+}).
\]

His proof consists of index preserving deformations, along with the relative index theorem of Gromov and Lawson in \cite{gromov1983positive} (discussed further 
in \cite{anghel1993abstract}) to reduce to a product type Dirac operator $\widetilde{\dirac D}$ on a Riemannian product $Y \times \R$,
\[
	\widetilde{\dirac D} = i\cl(\nu)\frac{\pa}{\pa t} + \dirac \pa_+  + i \lambda \chi(t) : C^\infty_c(Y\times \R; V_+) \to C^\infty_c(Y\times \R;V_+)
\]
with equivalent index.  Here $\chi : \R \to [-1,1]$ is a smooth function such that $\chi \equiv -1$ near $- \infty$ and $\chi \equiv 1$ near $+ \infty$.  
Direct computation then shows that $\ind(\widetilde{ \dirac D}) = \ind(\dirac \pa_+^+)$.  

We point out that the steps in his proof could just as easily reduce to a {\em scattering} product (i.e. $Y \times \R$, but with locally Euclidean ends instead 
of cylindrical ones), with a scattering type Dirac operator
\[
	\widetilde{\dirac D}' = i\cl(\nu)\frac{\pa}{\pa t} + \frac{1}{t}\dirac \pa_+ + i \lambda \chi(t) : C^\infty_c(Y\times \R; V_+) \to C^\infty_c(Y\times\R;V_+)
\]
whose index is equivalent to $\ind(\dirac \pa_+^+)$ by our own Theorem \ref{T:dirac}.  This is really overkill in this case, since the index of $\widetilde{\dirac D}$
is determined simply enough; however, it raises the point that scattering-type infinite ends ($\pa L \times [0, \infty)$, where $L \Subset X$ as above) 
may be utilized for the purpose of computing the index of Dirac operators satisfying Anghel's Fredholm conditions (injectivity near infinity for $P$ and its adjoint).
The author anticipates that cutting and gluing constructions, similar to those used in \cite{gromov1983positive} to prove the relative index formula, 
may be able exhibit such equivalences for arbitrary Fredholm differential operators satisfying injectivity near infinity conditions.

\bibliographystyle{amsalpha}
\bibliography{references}

\end{document}